\newtheorem{theorem}{Theorem}[section]
\newtheorem{proposition}[theorem]{Proposition}
\theoremstyle{remark}
\newtheorem{remark}[theorem]{Remark}
\begin{document}
\baselineskip 16pt

\thispagestyle{empty}

\begin{center}\sf
{\Large Numerical exploration of a}\medskip

{\Large forward--backward diffusion equation}\medskip

{\tt \today}\\
Pauline LAFITTE\footnote{EPI SIMPAF - INRIA Lille Nord Europe Research Centre,
Parc de la Haute  Borne, 40, avenue Halley, F-59650 Villeneuve d'Ascq cedex, France,
\& Laboratoire Paul Painlev\'e, UMR 8524, C.N.R.S. --
  Universit\'e de Sciences et Technologies de Lille, Cit\'e Scientifique,
  F-59655 Villeneuve d'Ascq cedex (FRANCE), \texttt{\tiny
    lafitte@math.univ-lille1.fr}}, Corrado MASCIA\footnote{Dipartimento di
  Matematica ``G. Castelnuovo'', Sapienza -- Universit\`a di Roma, P.le Aldo
  Moro, 2 - 00185 Roma (ITALY), \texttt{\tiny mascia@mat.uniroma1.it}}
\end{center}
\vskip.5cm

\begin{quote}\small \baselineskip 14pt 
{\sf Abstract.} We analyze numerically a forward-backward diffusion equation with a cubic-like diffusion function,  
--emerging in the framework of phase transitions modeling-- and its ``entropy'' formulation
determined by considering it as the singular limit of a third-order pseudo-parabolic equation.
Precisely, we propose schemes for both the second and the third order equations, we discuss
the analytical properties of their semi-discrete counterparts and we compare the numerical results 
in the case of initial data of Riemann type, showing strengths and flaws of the two approaches,
the main emphasis being given to the propagation of transition interfaces.
\vskip.15cm

{\sf Keywords.} Phase transition, pseudo-parabolic equations, numerical approximation, finite differences.
\vskip.15cm

{\sf AMS subject classifications.} 65M30 (80A22, 47J40, 35K70). 
\end{quote}

\baselineskip=16pt

\section{Introduction}

The aim of the present article is to investigate numerically the solutions to a nonlinear 
diffusion equation of the form
\begin{equation}\label{nonlinpar}
 \frac{\partial  u}{\partial t}=\frac{\partial^2 \phi(u)}{\partial x^2}
 \qquad\qquad (x,t)\in\mathbb{R}\times (0,+\infty)
\end{equation}
where $u\,:\,\mathbb{R}\times[0,+\infty)\to\mathbb{R}$, for a  non-monotone diffusion function 
$\phi\,:\,\mathbb{R}\to\mathbb{R}$; specifically, the function $\phi$ is to be cubic-like shaped, 
i.e. monotone decreasing in a given interval $I$ and monotone increasing elsewhere.

Due to the presence of intervals where $\phi$ is a decreasing function, the initial value problem 
for \eqref{nonlinpar} is ill--posed and an appropriate (non classical) definition of solution 
has to be introduced.  
In this respect, a common paradigm states that the loss of a well-posed classical framework may
emerge as consequence of some simplification of a given complete physical model, 
consisting in neglecting some higher order term because of the presence of some
small multiplicative factor $\varepsilon$.
The usual strategy is then to consider classical solutions to the original higher order equation,
pass to the limit as a definite parameter $\varepsilon$ tends to zero and regard such singular 
limit as the ``physical'' solution of the problem under consideration.  
Whenever possible, it is interesting to prescribe a definition of generalized solution to the reduced 
problem that is consistent with the limiting procedure and, at the same time, is determined by 
proper conditions to be verified by such a generalized solution, without explicit reference to the 
disregarded higher order terms.  
In this respect, a classical example is given by scalar conservation laws where solutions can be 
defined through a vanishing-viscosity approach and well-posedness is restored by considering an 
entropy setting.  
Different higher perturbations may, in principle, lead to different limiting formulation, as in the case 
of van der Waals fluids, where viscosity and viscosity-capillarity criteria give raise to different jump 
conditions in the limit,\cite{Slem83}.
In the same spirit, the approach through singular perturbations is commonly used in the context of 
calculus of variations to determine appropriate minima of non-convex energy functionals (among 
others, see the classical reference Ref.~\cite{Modi87}).  
A related approach consists in studying the structure of the global attractors of the higher order 
equations in the singular limit, as it has been performed for a relaxation viscous Cahn--Hilliard 
equation in Ref.~\cite{GGPM05}.

Here, the model \eqref{nonlinpar} arises as a reduced equation in the context of phase transitions  and 
we consider the formulation for \eqref{nonlinpar}, 
established in Refs.~\cite{Plot93}, \cite{Plot94}, \cite{EvaPor04}, 
obtained as singular limit of the third order equation of {\it pseudo-parabolic type}
\begin{equation}\label{relaxing}
 \frac{\partial u}{\partial t}= \frac{\partial^2}{\partial x^2} 
  \left(\phi(u)+\varepsilon\,\frac{\partial u}{\partial t}\right)
  \qquad\qquad
  x\in \mathbb{R},\quad t>0,
\end{equation} 
originally considered in Ref.~\cite{NovPeg91}.
The denomination ``pseudo-parabolic'', relative to the presence of the time derivative of the elliptic 
operator $\partial^2/\partial x^2$, has been proposed in Refs.~\cite{ShoTin70}, \cite{Ting69};
equations of the same form are also called of {\it Sobolev type} or {\it Sobolev--Galpern type}.
A sketch of the derivation of such an equation in the phase transition setting is given in  
Section \ref{sec:physical}.

Equations similar to \eqref{relaxing} arise also in modeling fluid flow in
heat conduction and shear in second order fluids (see, respectively,
Ref.~\cite{CheGur68}, Ref.~\cite{CoDuMi65} and descendants).  Besides, the third order
term in \eqref{relaxing} appears in the description of long waves in shallow
water (see Refs.~\cite{BenBonMah72}, \cite{Pere67} and descendants), as an alternative to
the usual purely spatial third order term considered in the Korteweg--DeVries
equation.  Taking the singular limit of such higher order equations gives in
general a different formulation with respect to the classical entropy
formulation obtained by means of the vanishing viscosity limit,\cite{DuPePo07}.  
Regardless of the physical context, the use of the third
order term as a regularizing term for an ill-posed diffusion equation of the
form \eqref{nonlinpar}, is called the {\sf quasi-reversibility method}, and it
has been used by many authors, mainly in the linear case (see Ref.~\cite{Payn75}
and references therein).

Independently of the monotonicity of the function $\phi$, the Cauchy problem
for equation \eqref{relaxing} is well-posed, \cite{NovPeg91}; moreover, the singular limit $u$ 
of the family $\{u^\varepsilon\}$, solutions to \eqref{relaxing}, can be described in term of 
Young measures, \cite{Plot93} \cite{Plot94}. 
The sign of $\phi'$ is relevant for the dynamical properties of the solutions: precisely,
the constant state $\bar u$ turns to be (asymptotically) stable if $\phi'(\bar u)>0$
and unstable if $\phi'(\bar u)<0$.
In the case under consideration, it is possible to distinguish two (unbounded) stable phases
divided by a (bounded) unstable phase, the so-called {\it spinodal region}.
Generically, initial data with values in the spinodal region generate oscillations which lead
to patterns mixing the two stable phases.

 Assuming additionally that the limit $u$ of the family $\{u^\varepsilon\}$ is piecewise smooth and 
 that it takes values only in the regions where the function $\phi$ is monotone increasing, it is
possible to determine appropriate admissibility conditions, called {\sf entropy conditions}, on a function 
in order for it to be limit of solutions to the higher order equation \eqref{relaxing} 
(see Proposition \ref{prop:entrsol} below).  
For such class of generalized solutions, from now on denominated {\sf two-phase solutions}, uniqueness 
and (local) existence have been proved, \cite{MaTeTe09}.

Here our aim is to investigate numerically the behavior of solutions to
\eqref{nonlinpar} exploring two possible strategies: on the one hand, by
discretizing the third-order equation \eqref{relaxing} and considering the
behavior of the algorithm as $\varepsilon\to 0$; on the other hand, by
providing an approximation of the solutions to \eqref{nonlinpar}, taking
advantage of the entropy conditions.  Schematically, if we denote by $h$ the
size of the discrete spatial mesh and by $\varepsilon$ the parameter appearing
in \eqref{relaxing}, both values are assumed to be small, ideally tending to
zero: in the first approach, we let $\varepsilon\to 0^+$ and then ${h}\to
0^+$. Viceversa, in the latter, we take the limits in the opposite order.
Absence of uniformity of one parameter with respect to the other implies that
the two procedures lead, in principle, to different results and we are
interested in comparing them.  Specifically, dealing directly with
\eqref{relaxing} obliges to treat solutions taking values also in the region
where $\phi$ is decreasing, with the consequent appearance of strong
oscillations due to the exponential instability of constant states for the
backward heat equation. 
On the contrary, the two-phase solutions are designed to take values only in the region 
of stability ($\phi$ increasing) and to cross the unstable region in single specific transition points 
where appropriate entropy transmission conditions should be satisfied.

We benchmark the two approaches against Riemann problems, determined by the values 
$(u^-,u^+)$  with $u^\pm$ belonging to the two different stable phases.
Depending on the location of such values, the initial phase transition discontinuity, hereafter 
called  {\it interface} in reference to the porous media equation \cite{minato}, may or may not 
move  (see Section \ref{sec:physical}).
As expected, relevant differences appear in the location of phase boundary depending on the 
algorithm chosen (see Figure \ref{fig:error_zeta}, and Section \ref{sec:twophase}).
\begin{figure}[ht]\label{fig:error_zeta}\centering
\includegraphics[width=9cm,height=6cm]{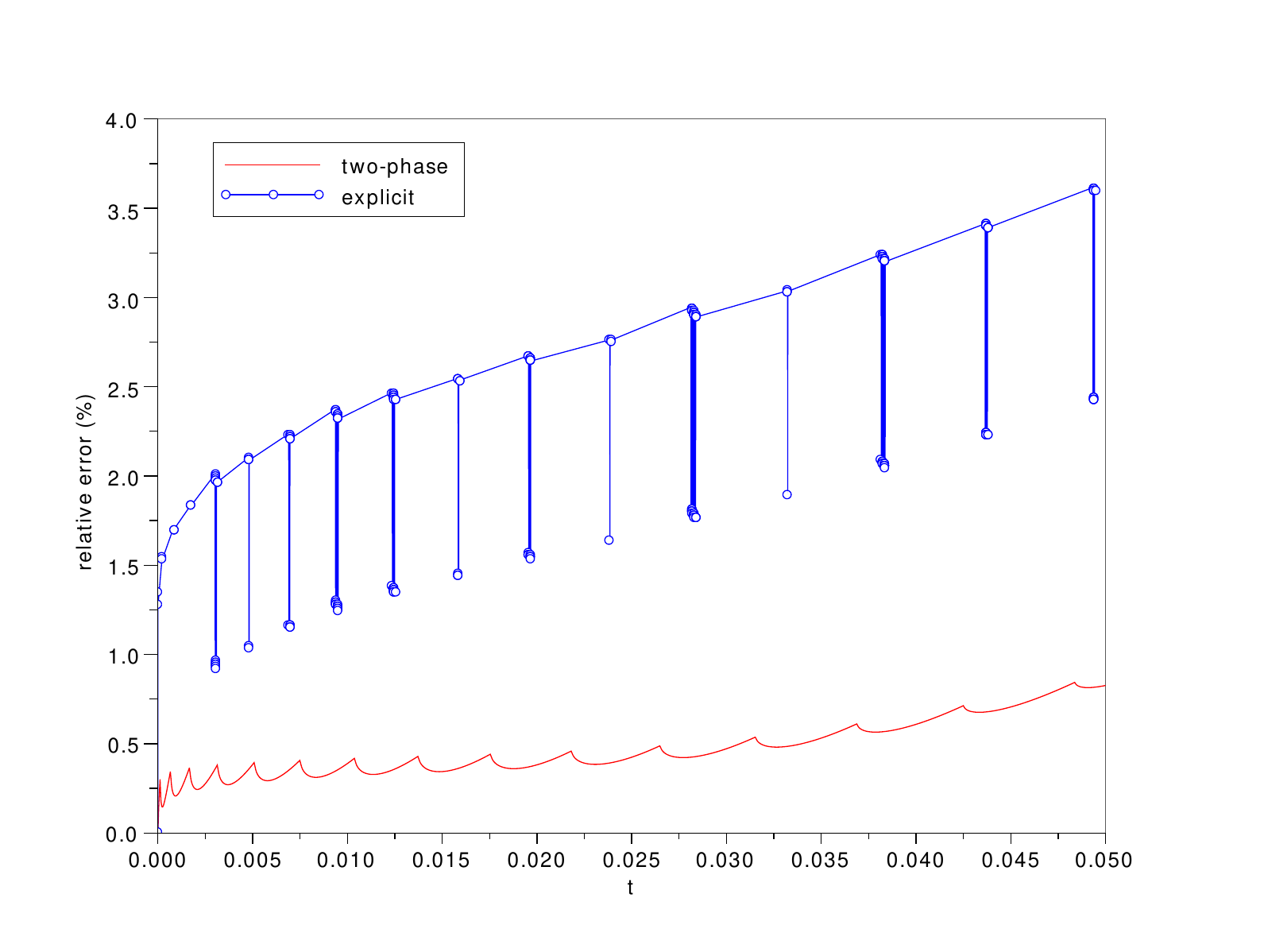}

\caption{\small Evolution of the relative error of the position of the interface for an initial data of
Riemann type with $(u^-,u^+)=(-2,4)$ for the two approaches: explicit scheme for the 
pseudo-parabolic equation (blue), two phase scheme (red).}
\end{figure}

Coming back to the original modeling, it would be more appropriate to consider equation 
\eqref{nonlinpar} as the singular limit of some higher order equation with a much more 
complicated structure with respect to the one of \eqref{relaxing} (see Section \ref{sec:physical}). 
To our knowledge, in this case there is still no available limiting formulation, meaning that
one does not know which are the admissibility conditions for the solutions to \eqref{nonlinpar} to
be singular limits of solutions to the corresponding higher order equation.
Hence, the only possible approach consists in taking the singular limit after having
discretized the complete equation.
Such procedure can not be rigorously justified because of the non-uniform dependence of
$\varepsilon$ with respect to ${h}$.
Therefore, analyzing a simplified situation where the two approaches are both available and can 
be compared, can be heuristically considered as a reliability test on the exchange of the 
limits $\varepsilon\to 0^+$ and ${h}\to 0^+$.   
    
The article is organized as follows.  
In Section \ref{sec:physical}, we sketch the derivation of the modeling equation in order to 
identify the regime of parameters that leads to \eqref{relaxing} and \eqref{nonlinpar}.  
Also, we recall the main results on the analytical properties of the equation, with particular 
care to the two-phase formulation.  
We also deduce an explicit formula for the solution of the Riemann problem in the
case of a piecewise linear diffusion function $\phi$.
In Section \ref{sec:relaxing}, we propose a standard finite-difference algorithm for the third order 
pseudo-parabolic equation \eqref{relaxing}, as considered in Ref.~\cite{EvaPor04}, and 
we analyze the qualitative behavior of the solutions for the corresponding semi-discrete scheme.
In Section \ref{sec:numerelaxing}, we test the fully-discretized algorithm in the case of  Riemann type 
initial data, showing the emergence of incorrect oscillations in the solution.  
Finally, in Section \ref{sec:twophase}, we implement an algorithm for the two-phase solution of
\eqref{nonlinpar}, consisting in coupling finite-differences schemes for the two stable phases 
with  some appropriate transmission conditions at the phase boundary and we compare the results
given by the two approaches.

\section{Physical and analytical background}\label{sec:physical}

Phase transitions modeling is a wide area of active research in pure and applied mathematics.  
The basic target is to describe the behavior of a mixture with many different (stable or unstable) phases, 
with special attention to the dynamics of separating interfaces and pattern formation.
Various approaches and models have been proposed and analyzed, depending on the specific context.  
Probably, the most celebrated is the Cahn--Hilliard model, which is based on defining an appropriate cost 
functional which determines two preferred states and, at the same time, penalizes inhomogeneities 
by means of a gradient term (for a survey on Cahn-Hilliard and phase field models, see Ref.~\cite{Fife00}).

Here, we consider a different model, devised in Ref.~\cite{NovPeg91}, that still preserves the basic 
structure of two stable ``competing'' phases and gain regularity from memory effects, without taking into 
account the cost of transition layers.  
In this Section, we present a simplified version of the derivation of the model equation, and then 
we recall the main known analytical results.

\subsection*{Derivation of the model}
Consider a generalized diffusion equation, in a one-dimensional domain described by 
the space variable $x\in\mathbb{R}$,
 \begin{equation}\label{gendiff}
 \frac{\partial u}{\partial t} = \frac{\partial^2 v}{\partial x^2} 
 \qquad\qquad x\in\mathbb{R},\, t>0,
\end{equation}
where $u=u(x,t)$ and $v=v(x,t)$ are real valued functions.
Following Ref.~\cite{JacFri85}, the unknown $v$, called {\it potential}, is supposed 
to satisfy the relation
\begin{equation}\label{relaxdiff}
  v:=\psi(u)+\int_{-\infty}^t \theta'(t-s)\,\bigl(\psi(u)-\phi(u)\bigr)(x,s)\,ds
\end{equation}
or, equivalently, 
\begin{equation*}
 v=\phi(u)+\int_{-\infty}^t \theta(t-s)\,\frac{\partial}{\partial s}
  \bigl(\psi(u)-\phi(u)\bigr)(x,s)\,ds,
\end{equation*}
where $\theta$ is a {\it relaxation  function}, assumed to be monotone decreasing
and such that $\theta(0)=1$ and $\theta(+\infty)=0$.  
The integral term in the definition of $v$ represents memory effects localized in space, 
with decay described by the relaxation function $\theta$.  
The case with no-memory is obtained by choosing $-\theta'$ equal to the usual Dirac 
distribution concentrated at zero.  
In this case, there holds $v=\phi(u)$ and equation \eqref{gendiff} reduces to a
standard nonlinear diffusion equation.

The relaxing structure \eqref{relaxdiff} mimes the analogous expression used
as a stress--strain law in the modeling of viscoelastic media and based on the
Boltzmann superposition principle (for the general mathematical theory, see
Ref.~\cite{RenHruNoh87} for viscoelastic materials with memory and Ref.~\cite{Rena00}
for viscoelastic flows).  In that context, functions $\psi$ and $\phi$
represent, respectively, the instantaneous elastic and the equilibrium
stress--strain responses.  Similar interpretations can be given also in the
present context as a contribution to the potential $v$.

The simplest choice for $\theta=\theta(t)$ is a single exponential function $e^{-t/\tau}$, $\tau>0$.  
In this case, it is possible to write a partial differential equation for the unknown $u$.  
Indeed, since $\theta(0)=1$ and $\theta'=-\theta/\tau$, there holds
\begin{equation*}
  \frac{\partial v}{\partial t}
  =\frac{\partial \psi(u)}{\partial t}
  -\frac{1}{\tau}\int_{-\infty}^t \theta(t-s)\,\frac{\partial}{\partial s}
  \bigl(\psi(u)-\phi(u)\bigr)(x,s)\,ds
  = \frac{\partial \psi(u)}{\partial t}-\frac{1}{\tau}(v-\phi(u))
\end{equation*}
Therefore, a relaxation function of exponential type corresponds to the assumption that the 
potential $v$ solves a relaxation-type equation
\begin{equation*}
 \frac{\partial}{\partial t}\left(v-\psi(u)\right)
  =\frac{1}{\tau}\left(\phi(u)-v\right)
\end{equation*}
The case with no-memory effects is obtained by taking the singular limit $\tau\to 0$ and, 
again, formally setting $v=\phi(u)$.
Substituting in \eqref{gendiff} differentiated with respect to $t$, we infer
\begin{equation*}
 \frac{\partial^2 u}{\partial t^2} = \frac{\partial^2}{\partial x^2} 
  \left(\frac{\partial \psi(u)}{\partial t}+\frac{1}{\tau}\phi(u)\right)
  -\frac{1}{\tau}\frac{\partial^2 v}{\partial x^2};
\end{equation*}
hence, using \eqref{gendiff} to eliminate the second derivative of $v$,
we obtain
\begin{equation*}
 \tau\,\frac{\partial^2 u}{\partial t^2}
  +\frac{\partial  u}{\partial t}=\frac{\partial^2}{\partial x^2}
  \left(\phi(u)+\tau\,\frac{\partial \psi(u)}{\partial t}\right)
\end{equation*}
Incorporating in the analysis a van der Waals term to take into account the free-energy 
cost of inhomogeneities would lead to a relaxation viscous perturbation of the 
classical fourth-order Cahn--Hilliard equation (containing also a fifth-order term
arising from the coupling of the inhomogeneity cost term and the memory effect).

From now on, we choose $\psi$ in the form $\psi(u)=\kappa\,u$ with $\kappa>0$. 
Hence, the unknown $u$ satisfies the equation
\begin{equation}\label{hyperbolic}
 \tau\,\frac{\partial^2 u}{\partial t^2}
  +\frac{\partial  u}{\partial t}=\frac{\partial^2}{\partial x^2}
  \left(\phi(u)+\varepsilon\,\frac{\partial u}{\partial t}\right)
\end{equation}
where $\varepsilon:=\tau\,\kappa$.  
In Ref.~\cite{NovPeg91}, the regime $\tau\to 0^+$ with $\varepsilon$ being fixed 
has been considered, yielding the pseudo-parabolic equation \eqref{relaxing}. 
The nonlinear diffusion equation \eqref{nonlinpar} arises in the limiting 
regime $\varepsilon\to 0^+$ of equation \eqref{relaxing}.  
Equivalently, we can consider the system of partial differential equations
\begin{equation*}
 \left\{\begin{aligned}
  &\frac{\partial u}{\partial t}-\frac{\partial w}{\partial x}=0,\\
  & \tau\,\frac{\partial w}{\partial t}-\frac{\partial \phi(u)}{\partial x}
   =\varepsilon\,\frac{\partial^2 w}{\partial x^2}-w
 \end{aligned}\right.
\end{equation*}
where $w$ denotes the space-derivative $\dfrac{\partial v}{\partial x}$ of the potential $v$,
and then consider equations \eqref{relaxing} and \eqref{nonlinpar} as formally
obtained by passing to the limit (in this order) $\tau\to 0^+$ and $\varepsilon\to 0^+$.

Let us stress that the assumptions on the relative sizes of parameters $\tau$ and 
$\varepsilon$ considered is not motivated by any physical argument and, thus,  is 
a pure mathematical simplification. 
To our knowledge, a complete description of the limiting behavior of the equation
\eqref{hyperbolic} as $\tau, \varepsilon\to 0^+$ for different relative sizes of the two
parameters is still not available.

\subsection*{Overview on the analytical theory}
Next, let us review the main analytical results known about \eqref{nonlinpar}
and \eqref{relaxing} and their relations.
Let $\phi\,:\,\mathbb{R}\to\mathbb{R}$  be a locally Lipschitz continuous function with a cubic-like 
graph with a local minimum with value $A$ and a local maximum with value $B$, 
for some $A<B$: precisely, we assume
\begin{equation*}
 \phi\textrm{ strictly increasing in }(-\infty,b)\cup(a,+\infty),\qquad
  \phi\textrm{ strictly decreasing in }(b,a)
\end{equation*}
for some $b<a$  with $\phi(a)=A$ and $\phi(b)=B$.
For later reference, we assume the existence of $c\in(-\infty,b)$ and $d\in(a,+\infty)$
such that $\phi(c)=A$ and $\phi(d)=B$.

The monotonicity of the function $\phi$ distinguishes three different phases
\begin{equation}\label{phases}
  \textrm{phase } \mathcal{S}^-=(-\infty,b],\qquad
  \textrm{phase } \mathcal{U}=(b,a),\qquad
  \textrm{phase } \mathcal{S}^+=[a,\infty).
\end{equation}
Being interested in the dynamics generated by the different stability properties
of the regions $(-\infty,b)$, $(b,a)$ and $(a,+\infty)$, we will restrict the analysis
to the case of a piecewise linear function and, specifically, to the case
\begin{equation}\label{pwlinsymm}
  \phi(u)=2u+\frac{3}{2}\bigl(|1-u|-|1+u|\bigr).
\end{equation}
In this case $c=-2, b=-1, a=1, d=2$  and $A=-1, B=1$ (see Figure \ref{fig:phi}).
\begin{figure}[ht]\centering
\includegraphics[height=5cm, width=6cm]{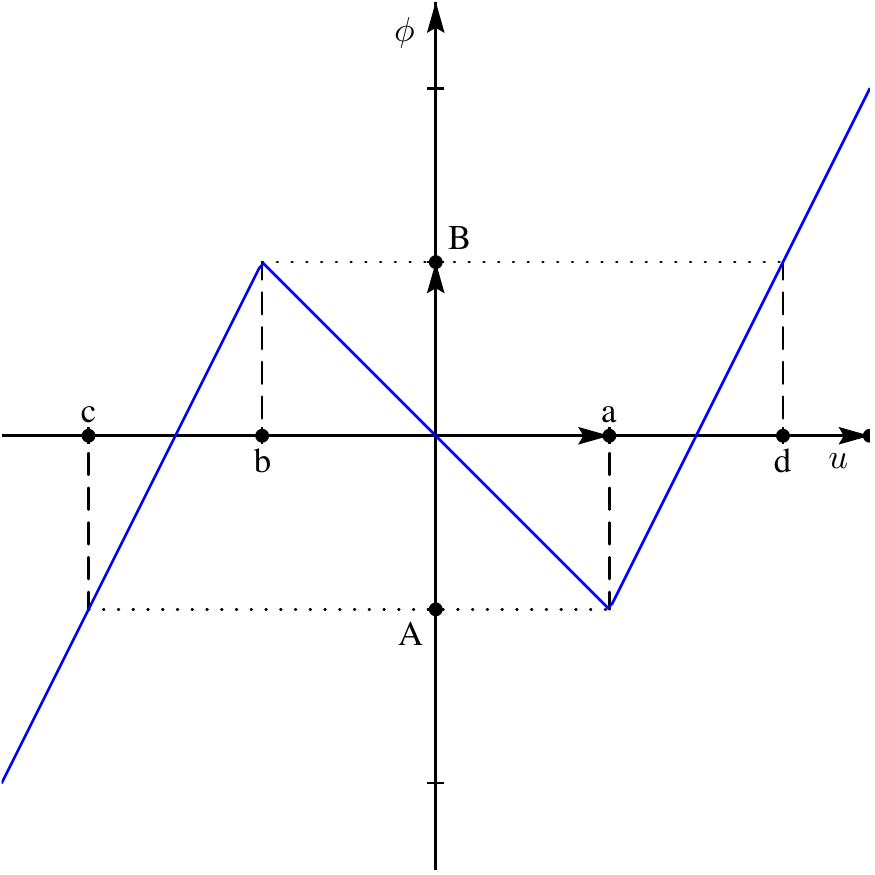}

\caption{\small\footnotesize The piecewise linear diffusion function $\phi$.}
\label{fig:phi}
\end{figure}
The third-order term in \eqref{relaxing} acts as a regularizing term and the initial value problem 
on a bounded interval $I$, with Neumann boundary conditions, for the pseudo-parabolic 
equation \eqref{relaxing} is  well posed in the classical  sense for initial data either bounded 
or continuous, \cite{NovPeg91} \cite{Padr04}.
In Ref.~\cite{NovPeg91}, it is also shown that any bounded measurable function $u=u(x)$ 
such that $\phi(u)$ is constant is a stationary solution of the equation and, if, in addition,
$\phi'(u(x))>0$ for any $x$, such a solution is also asymptotically stable (with exponential 
decay rate) with respect to zero-mass initial perturbations.
In particular, given $u^-<b<a<u^+$ and $x_0\in I$, step functions
\begin{equation}\label{riemannshape}
 u(x)=\left\{\begin{aligned}
  &u^- \qquad &x<x_0,\\ &u^+ \qquad &x>x_0,
  \end{aligned}\right.
\end{equation}
satisfying the transmission condition
\begin{equation*}
	\phi(u^-)=\phi(u^+)\in(A,B) 
\end{equation*}
are attracting time-independent solutions to \eqref{relaxing}. 
In the limit $\varepsilon\to 0^+$ such functions  persist to be solution of the 
corresponding limiting equation \eqref{nonlinpar}.  

In Refs.~\cite{Plot93}, \cite{Plot94}, a deep analysis of the limit $\varepsilon\to 0^+$ 
of the solutions $u^\varepsilon$ to the pseudo-parabolic equation \eqref{relaxing} has been 
performed within the Young measure framework.
Poorly speaking, the analysis of Plotnikov shows that the singular limit of the family of 
solutions $u^\varepsilon$ can be represented as 
\begin{equation}\label{plotnikov}
 u=\lambda^-(\phi^-)^{-1}(v)+\lambda^0 (\phi^0)^{-1}(v)
  +\lambda^+ (\phi^+)^{-1}(v)
\end{equation}
where $\lambda^0, \lambda^\pm$ and $v$ are bounded functions and $\phi^-,
\phi^0, \phi^+$ represent the restriction of $\phi$ in $(-\infty, b)$, $[b,a]$
and $[a,+\infty)$, respectively.  Functions $\lambda^0, \lambda^\pm$ are
non-negative and satisfy
\begin{equation*}
 \lambda^-(x,t)+\lambda^0(x,t)+\lambda^+(x,t)=1,\quad
	\textrm{and}\qquad
 \left\{\begin{aligned}
  &\lambda^-(x,t)=1	&\quad\textrm{if}\quad &v(x,t)<A,\\
  &\lambda^+(x,t)=1	&\quad\textrm{if}\quad &v(x,t)>B,
  \end{aligned}\right.
\end{equation*}
for any $(x,t)$ under consideration.  Finally, for any non decreasing function
$g \in C^1(\mathbb{R})$, setting $G^*:=\lambda^-\,G((\phi^-)^{-1}(v))+\lambda^0
G((\phi^0)^{-1}(v)) +\lambda^+ G((\phi^+)^{-1}(v))$, where
\begin{equation*}
 G(s):=\int_0^s g(\phi(\sigma))\,d\sigma,
 \end{equation*}
the following inequality holds in the sense of distribution
\begin{equation}\label{entropyPlot}
  \frac{\partial G^*}{\partial t} -\frac{\partial}{\partial x}
  \left(g(v) \frac{\partial v}{\partial x}\right)
  + g'(v)\left(\frac{\partial v}{\partial x}\right)^2\leq \, 0 
\end{equation}
Representation \eqref{plotnikov} can be heuristically interpreted by stating that, in the 
limit $\varepsilon\to 0$, the solution $u$ is decomposed in the sum of three contributions, 
each relative to one of the three phases $\mathcal{S}^\pm$ and $\mathcal{U}$.
Functions $\lambda^0, \lambda^\pm$ represent the fraction of $u$ in the 
corresponding  phase.  
Inequality \eqref{entropyPlot} can be interpreted as an entropy inequality, recording
the irreversibility of the limiting process.  
At the present time, it is not known if all these conditions are sufficient for determining uniqueness 
of the Cauchy problem (results on large-time behavior are obtained in Ref.~\cite{SmarTese10}).
Hence, the previous list of conditions is unsatisfactory for determining a well--posed 
limiting dynamics.

\subsection*{Two--phase solutions}
Linearization around an unstable value $\bar u\in(b,a)$ readily shows that solutions of the 
third order equation \eqref{relaxing} tend to escape from the unstable interval exponentially fast,
precisely with rate $\exp(-\phi'(\bar u)\,t/\varepsilon)$.
In the limit $\varepsilon\to 0^+$, values in $(b,a)$ instantaneously exit from the unstable
region; hence, solutions to \eqref{nonlinpar} are expected to take values generically
in the stable regions.
Thus, a natural point of view is to consider solutions composed of ``pure'' stable phases, 
meaning that, in the decomposition \eqref{plotnikov}, $\lambda^0$ is identically zero and 
$\lambda^\pm$ take values in the binary set $\{0,1\}$.
We refer to such special solutions as {\sf two-phase solutions}.

Assuming the smoothness of the interface separating the two stable phases, Evans and 
Portilheiro\cite{EvaPor04} took advantage of the entropy condition \eqref{entropyPlot} to 
determine pointwise conditions to be satisfied on such transition interface by a piecewise 
smooth solution, in the same fashion as in the case of hyperbolic conservation laws.  
Precisely, given the smooth curve $\gamma:=\big\{(\xi(t),t)\, | \,t \in [0,T]\big\}$, let
\begin{equation*}
 R^\pm:=\big\{(x,t)\,:\, t\in[0,T],\, \pm \big(x-\xi(t)\big)>0\big\},
\end{equation*}
and assume $u$ to be smooth outside $\gamma$  and such that, for any $t\in[0,T]$, 
\begin{equation*}
 u\big(R^-\big)\subseteq (-\infty,b],\qquad 
 u\big(R^+\big)\subseteq [a,\infty),
\end{equation*}
and assume that there exist finite limits
\begin{equation*}
  \lim_{\eta \to 0^+} u(\xi(t) \pm \eta,t)=: u(\xi(t)^\pm,t),\qquad
  \lim_{\eta \to 0^+} \frac{\partial u}{\partial x}(\xi(t) \pm \eta,t)=:
   \frac{\partial u}{\partial x}(\xi(t)^\pm,t).
\end{equation*}
Then condition \eqref{entropyPlot} can be translated in a pointwise condition to 
be satisfied along the phase transition curve $\gamma$.

\begin{proposition}\cite{EvaPor04}\label{prop:entrsol}
  A piecewise smooth weak solution $u$ with the above described
  structure satisfies the entropy condition \eqref{entropyPlot} if and only if:\\
  {\sf i.} the function $u$ is a classical solution of \eqref{nonlinpar}
  outside the curve $\gamma$;\\
  {\sf ii.}  along the curve $\gamma$ there hold
\begin{equation}\label{transmission}
 [\phi(u)]_\gamma=0,\qquad  \xi'(t)\,[u]_\gamma
  +\left[\frac{\partial \phi(u)}{\partial x}\right]_\gamma=0
\end{equation} 
where $[f]_\gamma:=f(\xi(t)^+,t)-f(\xi(t)^-,t)$ denotes the jump
of $f$ at $\gamma$.\\
{\sf iii.} along the curve $\gamma$ the following entropy conditions are satisfied
\begin{equation}\label{entropy} 
\left\{\begin{aligned}
 &\xi'(t)\geq 0  \quad \textrm{if}\; \;  \phi(u)(\xi(t),t)=A \, , \\
 &\xi'(t)\leq 0  \quad \textrm{if}\; \;  \phi(u)(\xi(t),t) = B \, , \\
 &\xi'(t)=0  \quad \textrm{if}\; \;  \phi(u)(\xi(t),t) \in (A,B). 
\end{aligned}\right.
\end{equation}
\end{proposition}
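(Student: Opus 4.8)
The plan is to derive the pointwise conditions \eqref{transmission} and \eqref{entropy} from the distributional inequality \eqref{entropyPlot}, exploiting the two-phase structure (so $\lambda^0\equiv 0$, $\lambda^\pm\in\{0,1\}$, hence $G^*=G(u)$ with $v=\phi(u)$ away from $\gamma$). First I would establish the ``only if'' direction by testing \eqref{nonlinpar} and \eqref{entropyPlot} against smooth compactly supported test functions. Since $u$ is smooth on each of $R^\pm$ and has one-sided limits on $\gamma$, integration by parts on $R^+$ and $R^-$ separately produces, besides the bulk terms that vanish because $u$ classically solves \eqref{nonlinpar} off $\gamma$ (this gives {\sf i}), line integrals along $\gamma$ whose integrands are linear combinations of the jumps $[\phi(u)]_\gamma$, $[u]_\gamma$ and $[\partial_x\phi(u)]_\gamma$ weighted by $\xi'(t)$. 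Taking $g\equiv 1$ (so $G(s)=\int_0^s\phi$, a genuine convex-type entropy since $\phi$ is increasing on the relevant ranges) together with the conservation form of \eqref{nonlinpar} itself yields the Rankine--Hugoniot relations: testing \eqref{nonlinpar} gives $\xi'(t)[u]_\gamma+[\partial_x\phi(u)]_\gamma=0$, and the requirement that $v=\phi(u)$ be the single well-defined function appearing in \eqref{entropyPlot} (it must have matching traces, otherwise the distributional derivative $\partial_x(g(v)\partial_x v)$ would contain a $\delta'$ on $\gamma$ with no counterterm) forces $[\phi(u)]_\gamma=0$. This establishes {\sf ii}.

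Next I would extract {\sf iii} from the residual, which after using {\sf ii} reduces \eqref{entropyPlot} to a nonpositive measure supported on $\gamma$. The surviving line integral along $\gamma$ should take the form $\int \bigl(\xi'(t)\,[G(u)]_\gamma - g(v^*)\,\xi'(t)\,[u\,?]\,\bigr)\,dt$ -- concretely, with $v^*:=\phi(u)(\xi(t),t)$ well-defined by {\sf ii}, the coefficient of the Dirac mass on $\gamma$ works out to $\xi'(t)\bigl([G(u)]_\gamma - g(v^*)[u]_\gamma\bigr)$, and nonpositivity of the distribution for \emph{every} admissible nondecreasing $g$ is the key constraint. I would then evaluate $[G(u)]_\gamma - g(v^*)[u]_\gamma = \int_{u^-}^{u^+}\bigl(g(\phi(\sigma))-g(v^*)\bigr)\,d\sigma$ along the path in $u$-space connecting the two traces. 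When $v^*=\phi(u^\pm)\in(A,B)$, one can choose $g$ increasing and supported near $v^*$ so that this integral has either sign unless $[u]_\gamma=0$... — more carefully, the admissibility for all $g$ forces the sign of $\xi'(t)$ to be pinned by where $v^*$ sits relative to $A$ and $B$: if $v^*=A$ (the bottom of the well, where the $\mathcal S^+$ branch terminates) the integrand $g(\phi(\sigma))-g(v^*)$ is $\geq 0$ throughout, forcing $\xi'(t)\ge 0$; symmetrically $v^*=B$ forces $\xi'(t)\le 0$; and $v^*\in(A,B)$ admits test functions $g$ of both monotonicity-compatible shapes that flip the sign of the bracket, so the only way the inequality survives for all of them is $\xi'(t)=0$. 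This is precisely \eqref{entropy}.

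For the ``if'' direction I would run the computation in reverse: given {\sf i}--{\sf iii}, reassemble $G^*=G(u)$ (well-defined globally since $[\phi(u)]_\gamma=0$ makes $v$ continuous across $\gamma$), integrate by parts back over $R^\pm$, check that the bulk contributions vanish by {\sf i}, and verify that the line integral along $\gamma$ equals $\int \xi'(t)\bigl(\int_{u^-}^{u^+}(g(\phi(\sigma))-g(v^*))\,d\sigma\bigr)\,dt$, which is $\le 0$ for every nondecreasing $g$ exactly by the sign analysis of {\sf iii}. The main obstacle I anticipate is bookkeeping the traces carefully: because $\partial_x u$ is only assumed to have one-sided limits, the jump terms from integrating the flux $g(v)\partial_x v$ by parts must be handled as genuine traces on the Lipschitz curve $\gamma$, and one must be sure no hidden curvature or $\dot\xi$ terms from the non-flat geometry of $\gamma$ have been dropped; rewriting everything in a local coordinate straightening $\gamma$ to a vertical line, or equivalently using the divergence theorem on $R^\pm$ with outward normal $(\mp 1,\pm\xi'(t))/\sqrt{1+\xi'(t)^2}$, keeps the signs honest. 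The other delicate point is justifying the ``for all nondecreasing $g\in C^1$'' quantifier: I would exhibit an explicit minimal family of test functions $g$ (e.g.\ mollified ramps centered at prescribed values in $(A,B)$, $\{A\}$, $\{B\}$) sufficient to force each case of \eqref{entropy}, rather than arguing abstractly.
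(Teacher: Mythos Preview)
The paper does not supply a proof of this proposition: it is stated with attribution to Evans and Portilheiro \cite{EvaPor04} and then immediately used, so there is no in-paper argument to compare your attempt against. Your outline is in fact the standard route taken in \cite{EvaPor04}: derive {\sf i} and the Rankine--Hugoniot pair {\sf ii} from the weak formulation by integrating by parts on $R^\pm$, then read off {\sf iii} from the sign of the singular part of \eqref{entropyPlot} on $\gamma$, which after using {\sf ii} reduces to
\[
\xi'(t)\Bigl(\,[G(u)]_\gamma - g(v^*)\,[u]_\gamma\Bigr)
=\xi'(t)\int_{u^-}^{u^+}\bigl(g(\phi(\sigma))-g(v^*)\bigr)\,d\sigma
\]
having a definite sign for every nondecreasing $g$. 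Your case analysis of $v^*\in\{A\},\{B\},(A,B)$ is the right one; just be careful with the overall sign of the surface term (with the conventions here the inequality one obtains is $\xi'(t)\bigl([G(u)]_\gamma - g(v^*)[u]_\gamma\bigr)\ge 0$, not $\le 0$, which is what actually yields $\xi'\ge 0$ when $v^*=A$, etc.). The two ``delicate points'' you flag --- honest trace bookkeeping via the divergence theorem on $R^\pm$, and exhibiting a concrete family of ramp test functions $g$ to force each case of \eqref{entropy} --- are exactly the places where the original proof spends its effort, so your plan is sound.
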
 
 
Roughly speaking, a two-phase solution can face alternatively two different
possible evolutions: the steady interface problem and the moving interface one.
\vskip.15cm

{\sf i.} {\sl Steady interface}.
If $\phi(u)(\xi(t),t)\in(A,B)$, then the entropy condition \eqref{entropy} implies that the transition 
interface $\gamma$ is steady.  Moreover, conditions \eqref{transmission} become
\begin{equation*}
 [\phi(u)]_\gamma=0,\qquad
 \left[\frac{\partial \phi(u)}{\partial x}\right]_\gamma=0.
\end{equation*} 
In this case, the solution can be obtained by solving \eqref{nonlinpar}
at the left and at the right of $\gamma$, with the condition of a 
$C^1$-connection for $\phi(u)$ at $\gamma$.
\vskip.15cm

{\sf ii.} {\sl Moving interface}.
If $\phi(u)(\xi(t),t)\in\{A,B\}$ then the entropy condition \eqref{entropy} implies that the 
transition interface $\gamma$ is allowed to move, becoming a free interface of the problem, 
with direction of propagation depending on the value $\phi(u)(\xi(t),t)$.
To fix ideas, let $\phi(u)(\xi(t),t)=A$. 
The values of $u$ at the left-hand and at the right-hand side of $\gamma$ are hence given by
\begin{equation*}
 u(\xi(t)^-,t)=c,\qquad u(\xi(t)^+,t)=a.
\end{equation*}
The second condition in \eqref{transmission}, to be read as a {\it Rankine--Hugoniot condition}, 
is needed to determine the speed of propagation of the free-interface.
\vskip.25cm

A result on local existence and uniqueness of such kind of solutions has been
proved in Ref.~\cite{MaTeTe09}, showing that the approach by Plotnikov is indeed
sufficient to guarantee well-posedness when restricted to solutions with
pure and stable phases.

Going back to the derivation of the complete model equation, it is natural to ask if the 
two-phase description for the limiting solutions to \eqref{relaxing} as $\varepsilon\to 0^+$ 
still holds when considering the limiting solutions to \eqref{hyperbolic} as $\varepsilon,\tau\to 0^+$.  
We are not aware of any result in this direction and we regard the problem of determining an 
appropriate limiting dynamics for \eqref{hyperbolic} as a very challenging direction to investigate.

\subsection*{Riemann problems}
Among other initial-value problems, the Riemann problem for \eqref{nonlinpar},
determined by an initial data of the form
\begin{equation}\label{rpb}
  u^0(x)=\left\{\begin{aligned}
      &u^- \qquad &x<0,\\ &u^+ \qquad &x>0,
  \end{aligned}\right.
\end{equation}
with $u^\pm\in\mathbb{R}$ given, is particularly noteworthy because of its invariance
with respect to the rescaling $(x,t)\mapsto(\lambda x,\lambda^2 t)$.
As a consequence, for any $u^\pm\in\mathbb{R}$, the solution of the Cauchy problem 
\eqref{nonlinpar}--\eqref{rpb} has the form
\begin{equation}\label{autosimilar}
 u(x,t)=f\left(\xi\right) \qquad \textrm{where}\quad\xi:=\frac{x}{\sqrt{t}}
\end{equation}
where the function $f$ is such that $f(\pm\infty)=u^\pm$.  Discontinuity
curves in the $(x,t)-$plane have the form $x=\bar \xi\sqrt{t}$ for some
$\bar\xi\in\mathbb{R}$.  For $\xi\neq\bar\xi$, the function $f$ is solution to the
following (second order) ordinary differential equation
\begin{equation}\label{ssedo} 
  \phi(f)''+\frac{1}{2}\,\xi\,f'=0.
\end{equation}
At any jump point $\bar\xi$, the following relations, obtained from conditions \eqref{transmission},
\begin{equation}\label{rhss}
 \bigl[\phi(f)\bigr]_{\bar\xi}=0,\qquad 
 \bigl[\phi(f)'\bigr]_{\bar\xi}+ \frac{1}{2}\bar\xi\bigl[f\bigr]_{\bar\xi}=0,
\end{equation}
have to hold.  
A detailed analysis of such self-similar solutions to \eqref{nonlinpar}  has been fulfilled
in the case of general initial data $u^\pm$ (hence, also in the unstable region)\cite{GilTes09}.
Here, we focus on the case of a piecewise linear diffusion function $\phi$
\begin{equation}\label{phipiecelin}
 \begin{aligned}
   \phi(u)&=\phi^-(u):=m^-u+q^-\quad \textrm{for}\quad u\leq b,\\
   \phi(u)&=\phi^+(u):=m^+u+q^+\quad \textrm{for}\quad u\geq a,
 \end{aligned}
\end{equation}
for some $m^\pm>0$ and $q^\pm\in\mathbb{R}$.  
In this case, it is possible to assemble an explicit formula for the solution satisfying 
the conditions described in Proposition \ref{prop:entrsol}.  
Such solutions will be useful to test the algorithms proposed in the subsequent Sections.

In order to give an explicit formula for the solution to the Riemann problem, let us introduce 
the functions, related to the fundamental solution of the linear diffusion equation in 
the half-space: given $m>0$, we set
\begin{equation*}
 E_{m}^-(\xi):=\frac{1}{\sqrt{4\,\pi\,m}}\int_{-\infty}^\xi e^{-y^2/4m}\,dy,\qquad
 E_{m}^+(\xi):=\frac{1}{\sqrt{4\,\pi\,m}}\int_{\xi}^{+\infty} e^{-y^2/4m}\,dy.
\end{equation*}
The functions $E^\pm_{m}$ satisfy the properties
\begin{equation*}
	E^\pm_m(\pm\infty)=0,\qquad
	E^\pm_m(\mp\infty)=1\quad\textrm{ and }\quad
	E^-_m+E^+_m\equiv 1.
\end{equation*}  
Without loss of generality, we consider the case $u^-\in(-\infty,b]$ and $u^+\in[a,+\infty)$.

\begin{proposition}\label{prop:rpb}. 
Let $\phi$ be given as in \eqref{phipiecelin}.
The Riemann problem for \eqref{nonlinpar}, determined by the initial datum \eqref{rpb}, 
has a two-phase solution with a steady interface if and only if 
\begin{equation}\label{entropycdngen}
  \frac{\sqrt{m^+}}{\sqrt{m^+}+\sqrt{m^-}}\,\phi(u^-)
  +\frac{\sqrt{m^-}}{\sqrt{m^+}+\sqrt{m^-}}\,\phi(u^+)\in[A,B],
\end{equation}
In this case, the solution has the form \eqref{autosimilar} where 
\begin{equation*}
  f(\xi):=\left\{\begin{aligned}
      &(\phi^-)^{-1}(g(\xi)) \qquad & \xi<0,\\ &(\phi^+)^{-1}(g(\xi))\qquad & \xi>0,
 \end{aligned}\right.
\end{equation*}
and
\begin{equation}\label{exprg}
  g(\xi):=\left\{\begin{aligned}
      &\phi(u^-)E_{m^-}^+(\xi)+\phi(u^+)E_{m^-}^-(\xi)
      -\frac{\sqrt{m^+}-\sqrt{m^-}}{\sqrt{m^+}+\sqrt{m^-}}\bigl[\phi\bigr]\,E_{m^-}^-(\xi)\quad & \xi<0,\\
      &\phi(u^-)\,E_{m^+}^+(\xi)+\phi(u^+)\,E_{m^+}^-(\xi)
      -\frac{ \sqrt{m^+}-\sqrt{m^-}}{ \sqrt{m^+}+\sqrt{m^-}}\bigl[\phi\bigr]E_{m^+}^+(\xi)\quad & \xi>0.
 \end{aligned}\right.
\end{equation}
\end{proposition}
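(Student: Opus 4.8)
The plan is to construct the candidate solution explicitly, verify it satisfies all the conditions of Proposition~\ref{prop:entrsol} with $\xi'(t)\equiv 0$ (steady interface at $x=0$, i.e. $\bar\xi=0$), and then show that the condition \eqref{entropycdngen} is exactly the constraint that the common interface value of $\phi(f)$ lies in $[A,B]$. Since the interface is steady, by item~{\sf i.} of the post-Proposition discussion the task reduces to solving \eqref{ssedo} separately on $\xi<0$ and $\xi>0$ with a $C^1$-matching of $\phi(f)$ at $\xi=0$, plus the boundary conditions $f(\pm\infty)=u^\pm$ and the phase constraints $f(\xi)\le b$ for $\xi<0$, $f(\xi)\ge a$ for $\xi>0$. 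Set $g:=\phi(f)$, so that on $\xi<0$ we have $f=(\phi^-)^{-1}(g)$ and on $\xi>0$ we have $f=(\phi^+)^{-1}(g)$; since $\phi^\pm$ are affine, $g$ satisfies a \emph{linear} ODE on each side.

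First I would rewrite \eqref{ssedo} on each half-line. On $\xi<0$, $\phi(f)=m^-f+q^-$, so $f=(g-q^-)/m^-$ and \eqref{ssedo} becomes $g''+\tfrac{\xi}{2m^-}\,g'=0$; likewise $g''+\tfrac{\xi}{2m^+}\,g'=0$ on $\xi>0$. The general solution of $g''+\tfrac{\xi}{2m}g'=0$ is an affine combination of $1$ and $\int_0^\xi e^{-y^2/4m}\,dy$, i.e. an affine combination of $E_m^-(\xi)$ and $E_m^+(\xi)$ (using $E_m^-+E_m^+\equiv 1$). So on $\xi<0$, $g(\xi)=\alpha^- E_{m^-}^-(\xi)+\beta^- E_{m^-}^+(\xi)$ and on $\xi>0$, $g(\xi)=\alpha^+ E_{m^+}^-(\xi)+\beta^+ E_{m^+}^+(\xi)$, for constants to be fixed by: (a) $g(-\infty)=\phi(u^-)$ and $g(+\infty)=\phi(u^+)$, which pins $\beta^-=\phi(u^-)$ and $\alpha^+=\phi(u^+)$; (b) continuity of $g$ at $\xi=0$, giving a common value $v_0:=g(0)$; and (c) continuity of $g'$ at $\xi=0$. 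The derivative condition is the only nontrivial one: $E_m^{-\prime}(0)=\tfrac{1}{\sqrt{4\pi m}}$ and $E_m^{+\prime}(0)=-\tfrac{1}{\sqrt{4\pi m}}$, so $g'(0^-)=\tfrac{\alpha^--\phi(u^-)}{\sqrt{4\pi m^-}}$ and $g'(0^+)=\tfrac{\phi(u^+)-\beta^+}{\sqrt{4\pi m^+}}$. Combining with continuity $v_0=\tfrac12(\alpha^-+\phi(u^-))=\tfrac12(\phi(u^+)+\beta^+)$ (since $E_m^\pm(0)=\tfrac12$), a short linear solve yields $v_0=\dfrac{\sqrt{m^+}\,\phi(u^-)+\sqrt{m^-}\,\phi(u^+)}{\sqrt{m^+}+\sqrt{m^-}}$, and back-substituting the resulting $\alpha^-,\beta^+$ produces precisely formula \eqref{exprg} after collecting the $E^-$ and $E^+$ terms and writing $[\phi]=\phi(u^+)-\phi(u^-)$. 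This is the main computational step; I expect the bookkeeping in re-expressing the answer in the stated form to be the only mildly delicate part, and I would present it as a direct verification that \eqref{exprg} solves the two ODEs with the correct limits and $C^1$-matching.

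Next I would check the entropy/admissibility side. The steady-interface branch of \eqref{entropy} requires $\phi(f)(0,\cdot)\in[A,B]$, i.e. $v_0\in[A,B]$, which is exactly \eqref{entropycdngen}; this gives the ``only if'' direction, since if $v_0\notin[A,B]$ a steady interface is incompatible with \eqref{entropy} (one would instead be in the moving-interface regime). For the ``if'' direction I must also verify the phase constraints $f\le b$ on $\xi<0$ and $f\ge a$ on $\xi>0$: because $E_m^\pm$ are monotone and $g$ is a monotone interpolation between $\phi(u^\mp)$ at the appropriate infinity and $v_0\in[A,B]\subseteq$ (range of $\phi$ on the relevant stable branch at its endpoint), $g(\xi)$ stays within $[\min,\max]$ of $\{\phi(u^-),v_0\}$ on $\xi<0$, and since $\phi^-$ is increasing and $(\phi^-)^{-1}(A)=c<b$, $(\phi^-)^{-1}(\phi(u^-))=u^-\le b$, we get $f(\xi)\le b$; symmetrically $f(\xi)\ge a$ on $\xi>0$. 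Finally I would note that \eqref{rhss} is automatically satisfied: $[\phi(f)]_{0}=0$ by construction, and with $\bar\xi=0$ the second relation reduces to $[\phi(f)']_{0}=0$, which is the $C^1$-matching already imposed. This closes the argument; the only genuine obstacle is making sure the monotonicity argument for the phase constraints is stated cleanly, and that the endpoint cases $v_0=A$ or $v_0=B$ (where $f$ touches $c$ or $d$ at infinity-side but still stays in the closed stable phase) are handled by the closed intervals in \eqref{phases}.
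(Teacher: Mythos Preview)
Your proof is correct and follows essentially the same route as the paper: solve the linear ODE for $g=\phi(f)$ on each half-line, impose the far-field limits and the $C^1$-matching at the interface, and identify \eqref{entropycdngen} as the condition $g(0)\in[A,B]$. The only organizational difference is that the paper sets up the linear system for a general interface location $\bar\xi$ and then specializes to $\bar\xi=0$ (which it reuses immediately afterward for the moving-interface case), whereas you fix $\bar\xi=0$ from the outset; your explicit verification of the phase constraints $f\le b$ on $\xi<0$ and $f\ge a$ on $\xi>0$ is a detail the paper leaves implicit.
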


\begin{proof}
Let us look for the self-similar solution $f$ in the form
\begin{equation*}
 f(\xi)=\left\{\begin{aligned}
  &u^-+C^-E_{m^-}^-(\xi) \qquad & \xi<\bar\xi,\\
  &u^+-C^+E_{m^+}^+(\xi) \qquad & \xi\geq \bar\xi,
 \end{aligned}\right.
\end{equation*}
where $\bar\xi, C^\pm$ have to be determined.
In terms of the function
\begin{equation*}
 g(\xi):=\phi(f(\xi))=\left\{\begin{aligned}
  &\phi(u^-)+\kappa^-\,E_{m^-}^-(\xi) \qquad & \xi<\bar\xi,\\
  &\phi(u^+)-\kappa^+\,E_{m^+}^+(\xi) \qquad & \xi\geq \bar\xi,
 \end{aligned}\right.
\end{equation*}
where $\kappa^\pm:=m^\pm\,C^\pm$, the transmission conditions \eqref{rhss} 
give a linear system for the unknowns $\kappa^\pm$
\begin{equation*}
  \left\{\begin{aligned}
      &E_{m^+}^+(\bar\xi)\,\kappa^++E_{m^-}^-(\bar\xi)\,\kappa^-= \bigl[\phi\bigr]\\
      &\left(-\frac{e^{-\bar\xi^2/4m^+}}{\sqrt{m^+}}+ \frac{\sqrt{\pi}\,\bar\xi}{m^+}\,E_{m^+}^+(\bar\xi)\right)\kappa^+
      +\left(\frac{e^{-\bar\xi^2/4m^-}}{\sqrt{m^-}}+\frac{\sqrt{\pi}\,\bar\xi}{m^-}\,E_{m^-}^-(\bar\xi)\,\right)\kappa^-
      =\sqrt{\pi}\,\bar\xi\,\bigl[u\bigr]
 \end{aligned}\right.
\end{equation*}
Setting
\begin{equation*}
 \begin{aligned}
 \Delta(\bar\xi)&:=E_{m^+}^+(\bar\xi)\left(\frac{e^{-\bar\xi^2/4m^-}}{\sqrt{m^-}}
 	+\frac{\sqrt{\pi}\,\bar\xi}{m^-}\,E_{m^-}^-(\bar\xi)\,\right)
  	+E_{m^-}^-(\bar\xi)\left(\frac{e^{-\bar\xi^2/4m^+}}{\sqrt{m^+}}
	-\frac{\sqrt{\pi}\,\bar\xi}{m^+}\,E_{m^+}^+(\bar\xi)\right)\\
  &=\frac{e^{-\frac{1}{4}\,\bar\xi^2(1/m^++1/m^-)}}{\sqrt{m^+\,m^-}}
  	\left\{F_+(\bar\xi)+F_-(\bar\xi)+\frac{\sqrt{\pi}\,(m^+-m^-)}{m^+\,m^-}\,
		\bar\xi\,F_+(\bar\xi)\,F_-(\bar\xi)\right\},
  \end{aligned}
\end{equation*}
where $F_i(\bar\xi):=\sqrt{m^i}\,e^{\bar\xi^2/4m^i}\,E_{m^i}^i(\bar\xi)$, with $i\in\{-,+\}$, 
the values of $\kappa^\pm$ are given by
\begin{equation*}
	\kappa^+=\frac{e^{-\bar\xi^2/4m^-}}{\sqrt{m^-}\,\Delta(\bar\xi)}\,\bigl[\phi\bigr]
	+\frac{\sqrt{\pi}}{m^-}\Bigl(\phi^+(u^+)-\phi^-(u^+)\Bigr)
	\frac{\bar\xi\,E_{m^-}^-(\bar\xi)}{\Delta(\bar\xi)}
\end{equation*}
\begin{equation*}
  \kappa^-=\frac{e^{-\bar\xi^2/4m^+}}{\sqrt{m^+}\,\Delta(\bar\xi)}\bigl[\phi\bigr]
  -\frac{\sqrt{\pi}}{m^+}
  \Bigl(\phi^+(u^-)-\phi^-(u^-)\Bigr)\frac{\bar\xi\, 
    E_{m^+}^+(\bar\xi)}{\Delta(\bar\xi)}. 
      \end{equation*}
In the case $\bar\xi=0$, there hold
\begin{equation*}
 \Delta(\bar\xi)=\frac{\sqrt{m^+}+\sqrt{m^-}}{2\sqrt{m^+\,m^-}},\qquad
 \kappa^\pm=\frac{2\,\sqrt{m^\pm}}{\sqrt{m^+}+\sqrt{m^-}}\,\bigl[\phi\bigr],
\end{equation*}
so that the function $g$ is given by \eqref{exprg}.
Such a formula defines an entropy solution of the problem if and only if
condition {\sf iii.} in Proposition \ref{prop:entrsol} is satisfied, that is,
\begin{equation*}
 \frac{1}{2}\phi(u^-)+\frac{1}{2}\phi(u^+)
      -\frac{1}{2}\frac{\sqrt{m^+}-\sqrt{m^-}}{\sqrt{m^+}+\sqrt{m^-}}\bigl[\phi\bigr]
      \in[A,B],
\end{equation*}
that is equivalent to \eqref{entropycdngen}.
\end{proof}

If \eqref{entropycdngen} does not hold, the Riemann problem exhibits a moving interface.
For the sake of simplicity, let us restrict the attention to the case $m^+=m^-=m$.  
Then the values of $\kappa^\pm$ reduce to
\begin{equation*}
  \kappa^\pm=\bigl[\phi\bigr]\pm\frac{\sqrt{\pi}}{\sqrt{m}} 
  \bigl(q^+-q^-\bigr)\,\bar\xi\,e^{\bar\xi^2/4m}\,E_{m}^\mp(\bar\xi)
\end{equation*}
and the function $g$ can be rewritten as
\begin{equation}\label{decogi}
 \begin{aligned}
   g(\xi)&=\left(\phi(u^-)+\frac{\sqrt{\pi}}{\sqrt{m}}
     \bigl(q^--q^+\bigr)\,\bar\xi\,e^{\bar\xi^2/4m}\,E_{m}^-(\bar\xi)
     \chi_{{}_{[\bar\xi,+\infty)}}(\xi)\right)E_{m}^+(\xi)\\
   &\qquad +\left(\phi(u^+)+\frac{\sqrt{\pi}}{\sqrt{m}}
     \bigl(q^--q^+\bigr)\,\bar\xi\,e^{\bar\xi^2/4m}\,E_{m}^+(\bar\xi)
     \chi_{{}_{(-\infty,\bar\xi)}}(\xi)\right)E_{m}^-(\xi)
 \end{aligned}
 \end{equation}
where $\chi_{{}_{I}}$ denotes the characteristic function of the set $I$.
Condition \eqref{entropycdngen} corresponds to the request 
 \begin{equation*}
   \frac{1}{2}(\phi(u^-)+\phi(u^+))\in[A,B].
 \end{equation*}
If $\frac{1}{2}(\phi(u^-)+\phi(u^+))>B$ (or $<A$), the relation $g(\bar\xi)=B$ ($=A$, respectively) 
is an implicit relation for the value $\bar\xi$ and the transition curve for the corresponding solution 
to the Riemann problem is given by $x=\zeta(t)=\bar\xi\sqrt{t}$.

\section{Semi-discrete approximation of the pseudo-parabolic equation}\label{sec:relaxing}

Next, we aim at designing appropriate numerical schemes for both equation \eqref{nonlinpar},
considered in the setting described in Section \ref{sec:physical}, and the pseudo-parabolic 
equation \eqref{relaxing}.
First of all, we concentrate on the latter, considered in a finite interval $I\subset\mathbb{R}$, with 
homogeneous Neumann boundary conditions,\cite{NovPeg91}
\begin{equation}\label{bdaryNeumann}
 \frac{\partial u}{\partial x}\Bigr|_{x\in\partial I}=0.
\end{equation}
Different types of approximations of pseudo-parabolic equations have been considered in the literature
(finite-element methods Reff.~\cite{ArDoTh81,LiLiRaCa02,TrDu05,GaRu09,GaQiZh09}, spectral methods
Ref.~\cite{Quar87}, finite differences Ref.~\cite{JaJeSa09}\dots).
Here, we consider a standard finite-difference space discretization for equation \eqref{relaxing},
that appears to be continuous at $\varepsilon=0$, so that the limit $\varepsilon\to 0^+$ can be 
analyzed by simply setting $\varepsilon=0$ directly in the algorithm.  
Formally, this amounts to an exchange of order of the two vanishing limits ${h}\to 0^+$ 
and $\varepsilon\to 0^+$.  
Because of the presence of the ratio $\dfrac{\varepsilon}{{h}^2}$, no uniformity of one limit with 
respect to the other parameter into play may hold, in general.  
Hence, there is no guarantee that such an approach leads to the correct limiting solution.  

\subsection*{The semi-discrete scheme}
Let $I:=(0,1)$.
Given $h:=1/(J-1)>0$  for some $J\geq 5$, let us set $x_j:=j\,h$ for $j=1,\dots,J$.
Let $U=U(t)=(U_1(t),\dots,U_J(t))$. 
Let us approximate the second order derivative with respect to $x$ with
\begin{equation*}
 \frac{\partial^2 u}{\partial x^2}(x_j,t)= 
 \frac{u(x_{j+1},t)-2u(x_{j},t)+u(x_{j-1},t)}{h^2}+O(h^2).
\end{equation*}
The boundary conditions \eqref{bdaryNeumann} translate into
the conditions
\begin{equation*}
 U_{1}(t)=U_{2}(t),\qquad U_{J}(t)=U_{J-1}(t).
\end{equation*}
For the sake of simplicity, we denote abusively $(\phi(U_1),\hdots,\phi(U_J))^T$ by $\phi(U)$.  
Equation \eqref{relaxing} can thus be approximated by the ODE system
\begin{equation*}
  \left(h^2\,{\mathbb I}+\varepsilon\,\mathbb{A}\right)
  \frac{dU}{dt}=-\mathbb{A}\,\phi(U),
\end{equation*}
where $\mathbb I$ is the identity matrix and 
\begin{equation}\label{matrixA}
\mathbb{A}:=\left(\begin{array}{ccccccccc}
 1 & -1 & 0 & \cdots& \cdots& \cdots & 0 & 0 & 0\\
 -1 & 2 & -1 & \ddots& \cdots& \cdots & 0 & 0 & 0\\
 0 & -1 & 2 & \ddots& \ddots& \cdots & 0 & 0 & 0\\
 \vdots & \ddots & \ddots & \ddots & \ddots & \ddots & \vdots& \vdots& \vdots\\
\vdots & \vdots & \ddots & \ddots & \ddots & \ddots & \ddots& \vdots& \vdots\\
\vdots & \vdots & \vdots & \ddots & \ddots & \ddots & \ddots& \ddots& \vdots\\
 0 & 0 & 0 & \cdots &\ddots & \ddots & 2 & -1 & 0\\
 0 & 0 & 0 & \cdots &\cdots &\ddots & -1 & 2 & -1\\
 0 & 0 & 0 & \cdots &\cdots &\cdots & 0 & -1 & 1\\
 \end{array}\right).
 \end{equation}
The matrix $\mathbb{A}$ being an irreducible Hessenberg symmetric matrix, $\mathbb{A}$ is
diagonalizable and all its eigenvalues are simple.  
By Gerschg\"orin-Hadamard's circle theorem, the spectrum $\sigma(\mathbb{A})$ of the (symmetric) 
matrix $\mathbb{A}$ is contained in $D(2,2):=\{\lambda\in\mathbb{C}\,:\,|\lambda-2|\leq 2\}$. 
Let us denote the eigenvalues of $\mathbb{A}$ by
\begin{equation}\label{spectrum_of_A}
  \lambda_j:=4\sin^2\left(\dfrac{(j-1)\pi}{2J}\right),\qquad j\in\{1,\hdots,J\},
\end{equation}
as can be inferred from the computation of the eigenvectors. 
As a consequence, the matrix $\mathbb{A}$ is positive semi-definite and the matrix 
$h^2{\mathbb I}+\varepsilon\,\mathbb{A}$ is invertible for any value of positive $\varepsilon$ and $h$. 
Note that $\lambda_{J}$ converges to $4$ as $J\rightarrow\infty$, so that the
upper bound given by Gerschg\"orin-Hadamard's circle theorem is optimal.

The semi-discrete scheme can be rewritten as
\begin{equation}\label{backforSD}
 \frac{dU}{dt}=-\left(h^2{\mathbb I}+\varepsilon\,\mathbb{A}\right)^{-1} \!\! \mathbb{A}\,\phi(U)
 	=:-\mathbb{A}_{h^2,\varepsilon} \phi(U).
\end{equation}
\begin{remark}\label{specdecA}
Since $\mathbb{A}_{h^2,\varepsilon}$ is a polynomial fraction of the matrix $\mathbb{A}$, the spectral theorem 
implies at once that $\mathbb{A}$ and $\mathbb{A}_{h^2,\varepsilon}$ share the same spectral decomposition,
that is their eigenvalues are simple and their eigenvectors are associated respectively with 
$\lambda_j$ and $R(\varepsilon\lambda_j/h^2)/\varepsilon$, for any $j\in\{1,\hdots,J\}$, 
where $R:x\in\mathbb{R}^+\mapsto x/(1+x)$. 
Since for any $x\geq 0$, $0\leq R(x)\leq \min(1,x)$, the spectrum of $\mathbb{A}_{h^2,\varepsilon}$ is 
bounded from above by $\min(1/\varepsilon,4/h^2)$. 
Thus the numerical effect of considering $\mathbb{A}_{h^2,\varepsilon}$ in place of $h^{-2}\,\mathbb{A}$ is 
that the large eigenvalues of $\mathbb{A}$ are filtered.
Note also the difference becomes more apparent when dealing with data in the unstable
region, $\phi'<0$, since for such values the semi-discrete scheme \eqref{backforSD} exhibits 
exponentially increasing behavior at the linearized level.
\end{remark}

Following Remark \ref{specdecA} and the fact that $\mathbb{A}_{h^2,\varepsilon}$ has a regular limit 
as $\varepsilon\to 0^+$, we now focus on the limit of the semi-discretized system
\begin{equation}\label{backforSDlim}
  \dfrac{dU}{d\tau}=-\mathbb{A}\,\phi(U),
\end{equation}
where $\tau=t/h^2$.  
In the original variable $t$, system \eqref{backforSDlim} is stiff as $h\to 0^+$. 
Note that, since $-\mathbb{A}\circ\phi$ is autonomous and globally Lipschitz continuous, the Cauchy
problem for system \eqref{backforSDlim} with an initial condition admits a unique $C^1$ solution 
defined globally. 
The study of the behavior of this semi-discrete solution requires the knowledge of both $L^\infty$ 
and $L^2$ properties of  System \eqref{backforSDlim}.
Since $\phi$ is assumed to be piecewise linear, this system can be seen as
\begin{equation}\label{nonautonomous} 
\dfrac{dU}{d\tau}(\tau)=-M(\tau)U(\tau)+W(\tau),
\end{equation}
where $M(\tau)$ is a $J\times J$ matrix and $W(\tau)$ is a vector of size $J$, $M$ and $W$ 
being  piecewise constant in time: the jumps occur when a point changes phases, that is when 
the interface  moves.

\begin{remark}
From now on, we will concentrate on a particular symmetric piecewise linear $\phi$ given 
in \eqref{pwlinsymm} such that $\phi^\pm(s)=2\,s\mp 3$, $a=-b=1$, $d=-c=2$ and
$B=-A=1$ (see Figure \ref{fig:phi}). 
In this case, condition \eqref{entropycdngen} becomes  $u^-+u^+\in[-1,1]$.
The numerical analysis we will perform makes use of the remarkable spectral properties of 
the matrix $\mathbb{A}$. 
If $\phi$ has different slopes $m^\pm>0$ on the left- and right- hand sides, then one can apply the following analysis 
keeping in mind that $\mathbb{A}$ should be replaced by $\mathbb{A}\mathcal{D}$, where $\mathcal{D}$ is a diagonal 
matrix of the form $\textrm{diag}(m^-,\hdots,m^-,m^+,\hdots,m^+)$, so that $\mathbb{A}$ and $\mathbb{A}\mathcal{D}$ are 
similar matrices and share the same spectral properties.
\end{remark}

\subsection*{Riemann initial data with steady interface}
To analyze the semi-discrete algorithm \eqref{backforSD}, we consider the Riemann initial data
\begin{equation*}
  u^0(x)=\left\{\begin{array}{cc}
      u^-\qquad &x<1/2,\\ u^+\qquad &x>1/2,
 \end{array}\right.
\end{equation*}
where $u^\pm\in\mathcal{S}^\pm$ are chosen such that $u^-\leq b<a\leq u^+$ satisfy \eqref{entropycdngen},
so that the interface should not move and both sides should remain in their respective stable phases.

First of all, let  $L\in\{1,\dots,J\}$ be such that the initial datum $U^0=(U_1^0,\dots, U_J^0)$ 
for the system \eqref{backforSDlim} satisfies
\begin{equation}\label{localization}
U_j^0\leq b<a\leq U_h^0\qquad\qquad 
  \forall\,j=1,\dots,L,\quad h=L+1,\dots,J,
\end{equation}
The behavior of  the solution $U$ can be easily determined since $\phi$ is supposed to be piecewise linear 
and symmetric, as given in \eqref{pwlinsymm}.  
Indeed, the function $V=\phi(U)=2U+(3,\hdots,3,-3,\hdots,-3)^T$ satisfies, at least locally in time, 
the system
\begin{equation}\label{Veq}
 \frac{dV}{ds}=-\mathbb{A}\,V,
\end{equation}
where $s=2\tau=2\,t/{h}^2$, so that, as long as the components of the solution $U$ are
localized as in \eqref{localization}, there holds
\begin{equation*}
  V(s)=\exp(-s\mathbb{A})V(0).
\end{equation*}
Since $\exp(-s\mathbb{A})=\lim\limits_{n\rightarrow+\infty}(\mathbb{I}+s\mathbb{A}/n)^{-n}$, 
$\|(\mathbb{I}+s\mathbb{A}/n)^{-1}\|_\infty= 1$ and $(\mathbb{I}+s\mathbb{A}/n)^{-1}\in(\mathbb{R}^+)^{J\times J}$,
we infer
\begin{equation*}
	\|\exp(-s\mathbb{A})\|_\infty=1\qquad \textrm{and}\qquad \exp(-s\mathbb{A})\in(\mathbb{R}^+)^{J\times J}. 
\end{equation*}
Consequently, if $V_j(0)\in[-1,1]$ for any $j$, then $V_j(s)\in[-1,1]$ for all 
$j\in\{1,\hdots,J\}$ and for any $s>0$. If $V_j(0)\not\in[-1,1]$, that is
$u^+>2$ or $u^-<-2$, and $u^++u^-\in[0,1]$ (resp. $u^++u^-\in[-1,0]$), one can
get back to the previous case by dividing $U^0$ by $u^-$ (resp. $u^+$).
 
Hence, there is no change of phases and $M(\tau)=-\mathbb{A}$ for all $\tau\geq0$. 
This behavior is of course compatible with condition \eqref{entropycdngen}.

Let us now study the asymptotic behavior of the solution $U$. 
The value $0$ belongs to $\sigma(\mathbb{A})$ and $\mathbb{A} r=0$ where
$r_1=(1,\dots,1)^T/\sqrt{J}$.  Therefore, the matrix $P_1:=r_1 r_1^T={\mathbf
  1}/J$, where ${\mathbf 1}$ is the $J\times J$ matrix composed of ones, is
the eigenprojection corresponding to the eigenvalue $0$.
Hence, the matrix $\mathbb{A}$ admits a spectral decomposition of the form
$\mathbb{A}=\dfrac{{\mathbf 1}}{J}+\sum\limits_{j=2}^{J} \lambda_j\,P_j$, where $\lambda_j$ are 
the eigenvalues of $\mathbb{A}$ and $P_j$ are the corresponding eigenprojections.
Therefore
\begin{equation}\label{expSD}
  e^{-\mathbb{A} s}=\frac{{\mathbf 1}}{J}
  +\sum_{j=2}^{J} e^{-\lambda_j\,s}\,P_j
\end{equation}
and a solution to \eqref{Veq} with initial data $V^0$ converges exponentially fast to
\begin{equation*}
 P_1\,V^0=\Bigl(\frac{1}{J}\sum_{j=1}^J\,V_j^0\Bigr)\,(1,\dots,1)^T.
\end{equation*}
This means that the solution $U$ converges to a Riemann-shaped steady state as $\tau\to+\infty$.
The solution to system \eqref{Veq} with initial data $V^0$ is given by
\begin{equation*}
  V(\tau)=e^{-\mathbb{A}\tau}\,V^0=\Bigl(\frac{1}{J}\sum_{j=1}^J\,V_j^0\Bigr)\,(1,\dots,1)^T
  +\sum_{j=2}^{J} e^{-\lambda_j\,\tau}\,P_j\,V^0.
\end{equation*}
If $U^0$ is a Riemann-type initial data with values $(u^-,u^+)$, that is
\begin{equation*}
U_j^0=u^-,\quad \forall\,j=1,\dots,L,\qquad\qquad
U_h^0=u^+\quad \forall\,h=L+1,\dots,J,
\end{equation*}
the jump being at the middle point of the grid, we have
\begin{equation}\label{limit_V}
  V(t/hx^2)=\frac{\phi(u^+)+\phi(u^-)}{2}+\sum_{j=1}^{J} e^{-\lambda_j\,t/h^2}\,P_j\,V^0
\end{equation}
Hence, there holds for all $j\in\{1,\hdots,J\}$
\begin{equation}\label{transvalue}
  \lim_{t\rightarrow+\infty}V_j(t/h^2)= \frac{\phi(u^+)+\phi(u^-)}{2}=v^\infty,
\end{equation}
that is the value towards which $\phi$ converges at the transition interface $\zeta$.  
Thus the solution $U$ converges to a two-phase state
\begin{equation}\label{lim_U}
  U_j^\infty=\left\{\begin{aligned}
    &(\phi^-)^{-1}(v^\infty) & \qquad &\textrm{if}\quad j\leq L,\\
    &(\phi^+)^{-1}(v^\infty) & \qquad &\textrm{if}\quad j\geq L+1.
  \end{aligned}\right.
\end{equation}
Thanks to the spectral properties \eqref{spectrum_of_A} and \eqref{limit_V},
we know that this convergence is exponential of rate
\begin{equation}\label{rate}
	2\lambda_2=\dfrac{8}{h^2}\sin^2\left(\dfrac{\pi}{2J}\right)
	\underset{J\rightarrow+\infty}{\sim}2\pi^2.
\end{equation}

\subsection*{Behavior at a transition point}
Now, let us focus on a model situation in which one point is on the verge of the unstable phase
$\mathcal{U}$, that is there exists $L\in\{2,\hdots,J-1\}$ such that the initial datum is given by
\begin{equation*}
  U^0=(b,\hdots,b,d,d+\delta)^T,
\end{equation*}
where the definitions of $b$, $d$ are described in Figure \ref{fig:phi}, the value $d$ is assumed
at the $(L+1)$-th position  of the vector $V^0$ and $\delta>0$ is a positive perturbation.  
The borderline point $V^0_L$ is attracted to the stable phase $\mathcal{S}^+$ because of the 
jump conditions, but it has to cross the unstable phase $\mathcal{U}$ first.  
Let us determine the expression of $M(\tau)$ at least in a neighborhood of $\tau=0$. 
We already know that the solution $U$ is of class $C^1$.  
At first, note that $U_j(\tau)$ is in the stable phase $\mathcal{S}^+$ for $j\geq L+1$ and that
\begin{equation*}
	\dfrac{dU_j}{d\tau}(0)= \phi(U_{j-1}(0))-2\phi(U_{j}(0))+\phi(U_{j+1}(0))
		=\left\{\begin{aligned} 
	&2\delta	&\qquad &\textrm{ if }j= L+1,\\
	&-2\delta	&\qquad &\textrm{ if }j= L+2,\\
	&0 &\qquad &\textrm{ otherwise},
  \end{aligned}\right.
\end{equation*}
so that 
\begin{equation*}
		U_j(\tau)= \left\{\begin{aligned}
			&-1+o(\tau) 				&\qquad &\textrm{ if }j\leq L,\\
			&2+2\delta\tau+o(\tau)		&\qquad &\textrm{ if }j= L+1,\\
  			&2+\delta- 2\delta\tau+o(\tau) 	&\qquad &\textrm{ if }j= L+2,\\
			&2+\delta+o(\tau) 			&\qquad &\textrm{ if }j\geq L+3.
  		 \end{aligned}\right.
\end{equation*}
Consequently, one has $(dU_{L}/d\tau)(\tau)=4\delta \tau+o(\tau),$ and therefore $U_L$ increases 
and changes phases $\mathcal{S}^-\rightarrow\mathcal{U}$. 
Meanwhile, for $j\leq L-1$, $(dU_{j}/d\tau)(\tau)=o(\tau), $ so that $U_j(\tau)=-1+o(\tau^2)$ and 
$(dU_{L-1}/d\tau)(\tau)=-2\delta \tau^2+o(\tau^2)$.  
By recursion, $U_j$ stays locally in the stable phase $\mathcal{S}^-$, for $j\leq L-1$ and we can 
write the ODE system \eqref{backforSDlim} linearly, at least locally in time in a neighborhood of $t=0$, 
as
\begin{equation}\label{unstable_dyn_sys}
   \frac{dU}{d\tau}=-\mathbb{B}\,U +\mathcal{B},
\end{equation}
with $\mathcal{B}:=(0,\hdots,0,-3,0,3,0,\hdots,0)^T$ and
{
\begin{equation*}
  \mathbb{B}:=2\left(\begin{array}{cccccccccccccc}
      1 & -1& 0 &\hdots &\hdots &\hdots &\hdots &\hdots &\hdots  &\hdots  &0 &0 &0 \\
      -1 & 2 & -1 &\ddots & \hdots & \hdots  &\hdots&\hdots &\hdots  &\hdots  & 0& 0&0\\
      0 & -1 & 2 & \ddots   & \ddots  & \hdots & \hdots&\hdots &\hdots &\hdots   & 0& 0&0\\
      \vdots & \ddots & \ddots & \ddots &\ddots &\ddots  &&& & &\vdots &\vdots &\vdots\\
      \vdots &\vdots & \ddots & -1 & 2 & -1 & \ddots& & & &\vdots &\vdots &\vdots \\
      \vdots & \vdots &\vdots &  \ddots & -1& 2 & 1/2 & \ddots& & &\vdots &\vdots &\vdots  \\
      \vdots & \vdots & \vdots   & & \ddots & -1 & -1& -1 &   \ddots&  &\vdots &\vdots &\vdots  \\
      \vdots & \vdots & \vdots   &   & &  \ddots & 1/2 &  2 & -1&   \ddots& \vdots &\vdots &\vdots\\
      \vdots & \vdots &  \vdots  &   &   & &  \ddots & -1 & 2 & -1&  \ddots&\vdots&\vdots\\
      \vdots & \vdots &  \vdots  &  &    &   & &  \ddots & \ddots  & \ddots  & \ddots &\ddots&\vdots\\
      0& 0 &0 &\hdots &\hdots &\hdots &\hdots&\hdots&\ddots &\ddots   &   2    &  -1     & 0 \\
      0& 0 &0 &\hdots &\hdots &\hdots &\hdots&\hdots&\hdots &\ddots &   -1    &  2     & -1 \\
      0& 0 &0 &\hdots &\hdots &\hdots &\hdots&\hdots&\hdots &\hdots   &  0     & -1     & 1
    \end{array}\right)
\end{equation*}
}where the ``defect'' is located at the $(L-1)$-th, $L$-th and $(L+1)$-th
lines.  The above expression allows us to give a formula for $U(\tau)$, for as
long as $U_L$ stays in the unstable phase $\mathcal{U}$, that is
$\tau\in[0,\tilde{\tau}]$:
\begin{equation}\label{formUunstable}
  U(\tau)=\exp(-\tau\mathbb{B})U(0)+\left(\int_0^t\exp(-s\mathbb{B})ds\right)\mathcal{B}.
\end{equation}
In order to fully exploit \eqref{formUunstable} and predict the time $\tilde{\tau}$ at which 
$U_L$ shall leave the unstable phase $\mathcal{U}$, let us describe the spectrum of $\mathbb{B}$.

\begin{proposition}\label{prop:specB}
The matrix $\mathbb{B}$ is diagonalizable in $\mathbb{R}$ with $J$ distinct eigenvalues which can be ordered 
as $-1<\mu_{-1}<\mu_0=0<\mu_1<\hdots<\mu_{J-2}<8$. 
In particular, $\mathbb{B}$ has only one negative eigenvalue.
\end{proposition}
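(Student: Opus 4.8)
The plan is to exploit that, for the phase configuration at hand, $\mathbb{B}$ is a rank-one modification of the symmetric matrix $2\mathbb{A}$. Writing the piecewise linear $\phi$ along the relevant configuration as $\phi(U)=\mathcal{D}\,U+\text{const}$, one has $\mathcal{D}=2\,\mathbb{I}-3\,e_Le_L^{T}$ (slope $2$ in the two stable phases, slope $\phi'=-1$ at the unstable site $L$), so that
\[
\mathbb{B}=\mathbb{A}\,\mathcal{D}=2\mathbb{A}-3\,(\mathbb{A}e_L)\,e_L^{T},
\]
$e_L$ being the $L$-th canonical vector; the three nonzero entries of $\mathbb{A}e_L$ account for the three defective rows $L-1,L,L+1$. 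I will use the spectral data from \eqref{spectrum_of_A}: $\mathbb{A}$ has simple eigenvalues $\lambda_1=0<\lambda_2<\dots<\lambda_J<4$ with orthonormal eigenvectors $q_1=\mathbf{1}/\sqrt{J},q_2,\dots,q_J$, and $\ker\mathbb{A}=\mathrm{span}(\mathbf{1})$, hence $\mathrm{range}\,\mathbb{B}=\mathrm{range}\,\mathbb{A}=\mathbf{1}^{\perp}$.

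The first step is the secular equation obtained from the matrix determinant lemma: for $\mu\notin\{2\lambda_2,\dots,2\lambda_J\}$,
\[
\det(\mu\,\mathbb{I}-\mathbb{B})=\det(\mu\,\mathbb{I}-2\mathbb{A})\,f(\mu),\qquad
f(\mu):=1+3\sum_{j=2}^{J}\frac{\lambda_j\,q_j(L)^{2}}{\mu-2\lambda_j}.
\]
Since $\det(\mu\,\mathbb{I}-2\mathbb{A})=\mu\prod_{j\ge2}(\mu-2\lambda_j)$, the value $0$ is always an eigenvalue; it is algebraically simple because $\ker\mathbb{B}=\mathrm{span}(\mathcal{D}^{-1}\mathbf{1})$ and $\mathrm{range}\,\mathbb{B}=\mathbf{1}^{\perp}$ meet only at $0$ (as $\mathbf{1}^{T}\mathcal{D}^{-1}\mathbf{1}=(J-3)/2\neq0$). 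All other eigenvalues are roots of $f$. Assume first that $q_j(L)\neq0$ for every $j\ge2$ (recall $q_1(L)=1/\sqrt{J}\neq0$). Then $f$ has precisely the $J-1$ simple poles $2\lambda_2<\dots<2\lambda_J$, is strictly decreasing on each interval of its domain (as $f'<0$ there), satisfies $f(\pm\infty)=1$, and tends to $+\infty$ (resp.\ $-\infty$) on the right (resp.\ left) of every pole. Consequently $f$ has exactly one simple root in each of the $J-2$ bounded gaps $(2\lambda_j,2\lambda_{j+1})$, one root in $(-\infty,2\lambda_2)$, and none in $(2\lambda_J,+\infty)$: this produces $J-1$ simple real roots. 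Together with $\mu_0=0$ we obtain $J$ pairwise distinct real eigenvalues, so $\mathbb{B}$ is diagonalizable over $\mathbb{R}$; moreover the $J-2$ roots in the bounded gaps lie in $(0,2\lambda_J)\subset(0,8)$, so the only eigenvalue that can be negative is the root $\mu_{-1}\in(-\infty,2\lambda_2)$.

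To place $\mu_{-1}$ in $(-1,0)$, note that $f$ is decreasing on $(-\infty,2\lambda_2)$, so it suffices to verify $f(-1)>0>f(0)$. One has $f(0)=1-\tfrac32\sum_{j\ge2}q_j(L)^{2}=1-\tfrac32(1-1/J)=(3-J)/(2J)<0$, and, using $\mathbb{A}(\mathbb{I}+2\mathbb{A})^{-1}=\tfrac12\bigl(\mathbb{I}-(\mathbb{I}+2\mathbb{A})^{-1}\bigr)$,
\[
f(-1)=1-\tfrac32\bigl(1-[(\mathbb{I}+2\mathbb{A})^{-1}]_{LL}\bigr)=\tfrac32\,[(\mathbb{I}+2\mathbb{A})^{-1}]_{LL}-\tfrac12 .
\]
Thus the point reduces to the estimate $[(\mathbb{I}+2\mathbb{A})^{-1}]_{LL}>1/3$, which I would prove from the variational identity $[(\mathbb{I}+2\mathbb{A})^{-1}]_{LL}=\max_{x}\bigl(2x_L-\sum_j x_j^{2}-2\sum_j(x_{j+1}-x_j)^{2}\bigr)$ together with the test vector $\hat x_j=\tfrac13\,2^{-|j-L|}$: on the whole line $\hat x$ is the Green's function of $\mathbb{I}+2\mathbb{A}_{\infty}$ at $L$ and makes the corresponding whole-line functional equal $\hat x_L=1/3$; restricting to $j\in\{1,\dots,J\}$ only discards strictly positive terms (the tails $\{j\le0\}$, $\{j\ge J+1\}$ being nonempty), so the finite functional at $\hat x$ is $>1/3$. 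Hence $f(-1)>0$, $\mu_{-1}\in(-1,0)$, and $\mu_{-1}$ is the unique negative eigenvalue. This settles the non-degenerate case.

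The part requiring care is the degenerate situation where $q_j(L)=0$ for some $j\ge2$ (which does occur, e.g.\ for $q_2$ at the mid-point when $J$ is odd): then $2\lambda_j$ is no longer a pole of $f$ but is still an eigenvalue of $\mathbb{B}$, with eigenvector $q_j$. Diagonalizability and the sign count survive: $\mathbb{B}$ is self-adjoint for the indefinite form $[x,y]:=x^{T}\mathcal{D}y$ (of inertia $(J-1,1)$), every eigenvector $v$ for a nonzero eigenvalue $\mu$ satisfies $[v,v]=\mu^{-1}(\mathcal{D}v)^{T}\mathbb{A}(\mathcal{D}v)$, nonzero of the sign of $\mu$ --- this forbids Jordan blocks at nonzero eigenvalues, and, since $\mathbb{B}$ maps into $\mathbf{1}^{\perp}$ on which $[\cdot,\cdot]$ has inertia $(J-2,1)$, it forces exactly one negative eigenvalue (hence simple) and $J-2$ positive ones counted with multiplicity; the localisation $\mu_{-1}\in(-1,0)$ carries over verbatim since the missing terms of $f$ vanish at $-1$ and $0$ anyway. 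What genuinely remains is to rule out a coincidence of such an exceptional eigenvalue $2\lambda_j$ with a root of $f$ --- equivalently $f(2\lambda_j)=\tfrac32\lambda_j[(\lambda_j\mathbb{I}-\mathbb{A})^{\dagger}]_{LL}-\tfrac12\neq0$ --- the only situation producing a repeated positive eigenvalue. I expect this to be excluded by a direct computation with the explicit trigonometric eigenvectors of $\mathbb{A}$, and it is the single step I anticipate being technically delicate.
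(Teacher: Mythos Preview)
Your approach is correct on all the essential points and genuinely different from the paper's. The paper proceeds by writing $\mathbb{B}^{T}=D\mathbb{A}$, conjugating by the eigenvector matrix $W$ of $\mathbb{A}$ and then by $\widehat\Lambda^{1/2}$ to reach a block upper-triangular form whose nontrivial block $\widetilde\Lambda^{1/2}\Delta\widetilde\Lambda^{1/2}$ is symmetric; Sylvester's law of inertia applied to $\Delta=2\,\mathrm{Id}-3\,\widetilde W^{T}E_{L-1,L-1}\widetilde W$ gives the sign count, Gershgorin on $\mathbb{B}^{T}$ gives the upper bound $8$, and the lower bound $-1$ is obtained by an explicit evaluation of the characteristic polynomial at $\mu=-1$ using the closed-form roots $v_{\pm}(\mu)$. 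Your route via the rank-one secular equation and interlacing replaces the similarity gymnastics by a one-line determinant lemma, and your Green's-function/variational argument for $[(\mathbb{I}+2\mathbb{A})^{-1}]_{LL}>1/3$ is both shorter and more transparent than the paper's computation of $\tilde\chi(-1)$. What the paper's route buys in exchange is that it never needs to split into non-degenerate and degenerate cases.

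The one place where your argument is incomplete is exactly the step you flag: in the degenerate case you establish diagonalizability and the inertia $(J-2,1)$ for the nonzero spectrum via the $\mathcal{D}$-indefinite form, but distinctness of the positive eigenvalues is left open. There is a one-line fix that avoids the delicate computation you anticipate. The matrix $\mathbb{B}$ is \emph{unreduced tridiagonal}: every sub- and super-diagonal entry is nonzero (in your notation the three ``defective'' rows have entries $(-2,4,1)$, $(-2,-2,-2)$, $(1,4,-2)$, still tridiagonal with nonzero off-diagonals). For any such matrix and any $\mu$, the first $J-1$ columns of $\mathbb{B}-\mu\mathbb{I}$ are linearly independent (triangularize using the nonzero super-diagonal), so $\mathrm{rank}(\mathbb{B}-\mu\mathbb{I})\ge J-1$ and every eigenvalue has geometric multiplicity $1$. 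Combined with the diagonalizability you already proved, this forces all eigenvalues to be algebraically simple, closing the gap without any case analysis on $q_j(L)$.
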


\begin{remark}
The negative eigenvalue of $\mathbb{B}$ is the one that allows the borderline point to go through the 
unstable phase $\mathcal{U}$.
\end{remark}

\begin{proof}
The matrix $\mathbb{B}^T$ can be written as $\mathbb{B}^T=D\mathbb{A}$ where $D=\textrm{diag}(2,\hdots,2,-1,2,\hdots,2)$.  
Let us prove that $\mathbb{B}^T$ is diagonalizable.  
As mentioned before, $\mathbb{A}$ is symmetric non-negative and can be diagonalized in an orthonormal 
basis $(r_0,\hdots,r_{J-1})$. 
Let $\Lambda:=\textrm{diag}(0,\lambda_1,\hdots,\lambda_{J-1})$, 
$\widehat\Lambda:=\textrm{diag}(1,\lambda_1,\hdots,\lambda_{J-1})$, which is invertible, 
and $W$ be the unitary matrix the columns of which are $(r_0,\hdots,r_{J-1})$ in that order, so that
\begin{equation*}
  W=\left(\begin{array}{ccc}
      \dfrac{1}{\sqrt{J}}& & w^T\\&&\\\dfrac{1_{J-1,1}}{\sqrt{J}} && \widetilde{W}
    \end{array}\right),
\end{equation*}
with $w\in\mathbb{R}^{J-1}$ and $\mathbb{A}=W \Lambda W^T$ where we denote by $\widetilde{M}$
the $(1,1)$-submatrix of size $(J-1)\times(J-1)$ of a $J\times J$ matrix $M$.
Let us note that
\begin{equation}\label{W}
  \dfrac{1}{J}1_{J-1,J-1}+\widetilde{W}\,\widetilde{W}^T
  	=ww^T+\widetilde W^T\,\,\widetilde W=\mathrm{Id}_{J-1,J-1},
\end{equation}
so that, in particular, $\widetilde{W}$ is invertible and $\widetilde W^{-1}=\dfrac{J}{J-1}\widetilde W^T$.  
The matrix $C:=W^T \mathbb{B}^T W = W^T D W W^T \mathbb{A} W =W^T D W \Lambda$ is similar to $\mathbb{B}^T$. 

Let us now multiply on the left- and right-hand sides of $C$ by
$\widehat\Lambda^{1/2}$ and $\widehat\Lambda^{-1/2}$:
\begin{equation*}
  \widehat\Lambda^{1/2}C\widehat\Lambda^{-1/2} = 
  	\widehat\Lambda^{1/2}W^T D W \Lambda^{1/2}
  	= \left(\begin{array}{cc} 0 & c^T \\
    		0_{J-1,1} & \widetilde\Lambda^{1/2}\Delta\widetilde\Lambda^{1/2}
 			\end{array}\right)
\end{equation*}
where $c\in\mathbb{R}^{J-1}$ and $\Delta:=\widetilde W^T\widetilde{D}\widetilde W+2ww^T$ which is symmetric. 
Let us show now that $\Delta$ is invertible and has exactly one negative and $J-2$ positive eigenvalues, 
so that $\mathbb{B}^T$ (and $\mathbb{B}$) is similar to a diagonalizable matrix with one negative, one zero and $J-2$
positive eigenvalues using Sylvester's law of inertia.

Using relation \eqref{W}, one can rewrite $\Delta$ as
\begin{equation*}
  \Delta=2 Id + \widetilde W^T(\widetilde{D}-2  Id)\widetilde W.
\end{equation*}
The definition of $D$ implies that 
\begin{equation*}
	\widetilde{D}-2 Id=-3 E_{LL}
\end{equation*}
where $E_{L-1L-1}:=(\delta_{i=L-1,j=L-1})_{1\leq i,j\leq J-1}$, $\delta$ being the Kronecker symbol. 
The spectrum of $\Delta$ is thus $2$ with multiplicity $J-2$ and associated eigenspace 
$\widetilde W^{-1}(\mbox{Span}(e_j)_{j\neq L-1})$ and $-\dfrac{J-3}{J}$ associated with 
$\widetilde W^{-1} e_{L-1}$. 
This proves that $\mathbb{B}$ is diagonalizable with simple eigenvalues with the announced signs.

Applying Gerschg\"orin-Hadamard's circle theorem to $\mathbb{B}^T$, we have
\begin{equation*}
  \mathrm{Sp}(\mathbb{B})\subset D(4,4)\cup D(-2,2),  
\end{equation*}
so that we get the upper bound of the spectrum quite easily. 
To prove the lower bound, we need to examine more closely the computation of the
eigenvalues: assume $(\mu,V)$ is an eigenvalue/eigenvector of $\mathbb{B}^T$. 
Solving $\mathbb{B}^TV=\mu V$ leads to solving two linear recurrences of order 2 that have the
same characteristic polynomial with compatibility conditions. 
The computation gives the following result: if $\mu\in[-4,0)\cup(0,8)$ is an eigenvalue of $\mathbb{B}^T$, 
then
\begin{equation*}
 \begin{aligned}
  \chi(\mu) &:=-2(\mu+2)\left[v_+^{J-2}(\mu)+v_-^{J-2}(\mu)\right]
  	+(4+\mu(2-\mu))\left[v_+^{J-1}(\mu)+v_-^{J-1}(\mu)\right]\\ 
   	&\hspace{1cm} +\,3\,\mu\,\left[v_+^{J-2L-1}(\mu)+v_-^{J-2L-1}(\mu)\right]\\
  	& =  0
\end{aligned}
\end{equation*}
where $v_{\pm}(\mu)=1-\dfrac{\mu}{4}\pm\sqrt{\dfrac{\mu}{2}\left(\dfrac{\mu}{8}-1\right)}$.
Note that, for all $k\in\mathbb{N}$, 
\begin{equation*}
  v_+^k(\mu)+v_-^k(\mu)=\sum_{p=0}^{[(k-1)/2]}\left( \begin{array}{c} k\\2p \end{array}\right)
  	\left(1-\dfrac{\mu}{4}\right)^{k-2p}\left(\dfrac{-\mu}{2}\right)^{p} \left(1-\dfrac{\mu}{8}\right)^{p}
\end{equation*} 
is a $\mu$-polynomial of degree $k$, so that $\chi$ is also a $\mu$-polynomial, of degree $J+1$. 
The possible eigenvalue $\mu=8$ is not valid in $\chi$ since it corresponds to a double root in $v_\pm$. 
However, $\chi(8)=0$, but a further computation shows that $8$ is not an eigenvalue of $\mathbb{B}$: it implies 
that the characteristic polynomial of $\mathbb{B}^T$ is $\tilde\chi:=\chi/(8-\mu)$. 
We already know that there is only one negative eigenvalue of $\tilde\chi$. 
Proving that $\tilde\chi(-1)\tilde\chi(-\infty)>0$ will show that this negative eigenvalue is necessarily larger 
than $-1$.  Since $v_\pm(-1)=2^{\pm 1}$, we get $\tilde\chi(-1)=-(2^{1-J}+2^{J-2L-1}+2^{2L+1-J})/3<0$. 
Thanks to the expression of $v_+^{J-1}(\mu)+v_-^{J-1}(\mu)$, one sees that it is positive as $\mu$
tends to $-\infty$ so that $\tilde\chi(-\infty)<0$. 
This ends the proof of Proposition \ref{prop:specB}.
\end{proof}

Since $\mathbb{B}$ is diagonalizable with simple eigenvalues, one can compute explicitly the 
spectral projectors: let $\mu$ be an eigenvalue of $\mathbb{B}$ and $(v,w)\in(\mathbb{R}^J\setminus\{0\})^2$ 
such that $\mathbb{B} v=\mu v$, $\mathbb{B}^Tw=\mu w$. 
Since $w^Tv\neq0$, one can assume $w^Tv=1$, so that the spectral projector associated to 
$\mu$ reads $P_\mu=vw^T/(w^Tv)$. 
The direct consequence is the following expression for $U(\tau)$, $\tau\in[0,\tilde{\tau}]$, 
from \eqref{formUunstable}:
\begin{equation}\label{U-proj-gen}
  U(\tau)=P_0(U(0)+\tau\mathcal{B})
  	+\sum_{\mu\in\mbox{\small Sp}(\mathbb{B})\setminus\{0\}}P_\mu\left\{e^{-\tau\mu}U(0)
	+\dfrac{1-e^{-\tau\mu}}{\mu}\mathcal{B}\right\}
\end{equation}
so that, identifying the asymptotically larger terms and noting that, since
$P_0=J\, D^{-1}e\, e^T/(J-1)$,  there holds $P_0\mathcal{B}=0$, we infer
\begin{equation}\label{U-proj}
  U(\tau)=e^{-\tau\mu_{-1}}P_{\mu_{-1}}\left(U(0)-\dfrac{1}{\mu_{-1}}\mathcal{B}\right)
  	+P_0U(0)+\sum_{\mu\in\mbox{\small Sp}(\mathbb{B})\setminus\{0\}}
	\dfrac{1}{\mu}P_{\mu}\mathcal{B}+o(e^{-\mu_1\tau}).
\end{equation}
An approximation of the exit time $\tilde{\tau}$ is thus given by
\begin{equation}\label{exit-time}
  \tilde{\tau}\simeq \dfrac{\log\left(a-[P_0U(0)](L)-\sum_{\mu\in\mbox{Sp}(\mathbb{B})\setminus\{0\}}
	\dfrac{[P_{\mu}\mathcal{B}](L)}{\mu}\right)}
	{\log\left(\left[P_{\mu_{-1}}\left(U(0)-\dfrac{\mathcal{B}}{\mu_{-1}}\right)\right](L)\right)}.
\end{equation}
Note that formulas \eqref{formUunstable}-\eqref{U-proj-gen}-\eqref{U-proj}-\eqref{exit-time}
hold for general piecewise-linear $\phi$, changing $\mathbb{B}$ to a similar matrix and modifying $\mathcal{B}$.

In conclusion, within the time interval $[0,\tilde{\tau}]$, there is only one component, namely $U_L$,
that lies in the unstable phase $\mathcal{U}$ and, as soon as it has entered the stable phase $\mathcal{S}^+$, 
the matrix of the dynamical system is again as in \eqref{Veq}. 
It means that the interface between $\mathcal{S}^-$ and $\mathcal{S}^+$ has moved to the left, as predicted 
by \eqref{entropycdngen}. 
One is then faced to two independent constant coefficient heat equations, that can be treated as in the steady 
configuration case. 
If, on the right-side, the perturbation $\delta$ is too large, $U_{L+1}$ will at some point be larger than $B$ 
and the situation will revert to that with a point in the unstable phase $\mathcal{U}$. 
The system will then move the interface to the right-hand side again by making this point go through the
unstable phase $\mathcal{U}$, until the border is reached or the discretization is such that $\phi(U_{L+1})$ 
is less that $B$.

In the general case of initial data of Riemann type, satisfying \eqref{localization}, and such that $|u^-+u^+|>1$,  
the semi-discrete differential system will make $U_L$ converge exponentially fast to the ``closest'' border 
predicted by the entropy conditions \eqref{entropy}:  
if $u^-+u^+<-1$, then  $(U_L(t),U_{L+1}(t))$ goes to $(c,a)$ and otherwise $(U_L(t),U_{L+1}(t)$ goes to $(b,d)$. 
The rate of convergence to this intermediate state is again 
$2\lambda_2=8\sin^2(\pi/(2J)/h^2\underset{J\rightarrow+\infty}{\sim}2\pi^2$. 
Once the border is reached, we are faced with a situation where the dynamical system is the same 
as in \eqref{unstable_dyn_sys}. 
The analysis previously developed still holds: the borderline point ($U_L$ if $u^-+u^+>1$ or 
$U_{L+1}$ otherwise) is the only one going through the unstable phase $\mathcal{U}$. 
Once it has reached the opposite stable phase, the following point ($U_{L-1}$ or $U_{L+2}$), 
that has been monotonically changing (decreasing or increasing) goes to the border, then 
enters $\mathcal{U}$ and so on, so that the interface moves as predicted by Proposition
\ref{prop:entrsol} in the continuous setting.

\section{Numerical experiments in the limiting regime $\varepsilon=0$}\label{sec:numerelaxing}

Now that we have shown that the semi-discrete scheme plays the role of simulating either 
the convergence to a Riemann-type steady state or the movement of the interface, we turn our
attention to performing concrete numerical experiments, by adding an appropriate discretization
for the time derivative.

There are two ways of fully discretizing the problem, remembering that equation \eqref{nonlinpar} 
is fully non-linear and $\phi$ is not differentiable, so that Newton's algorithm cannot be used in 
a fully implicit scheme:
\vskip.15cm

-- either discretize explicitly in time and space, that is introducing a time-discretization 
$t^n=n\Delta t$ with step $\Delta t$ and using an explicit Euler scheme
\begin{equation}\label{backforD}
  U^{n+1}=U^n-\frac{\Delta t}{{h}^2}\,\mathbb{A}\,\phi(U^n),
\end{equation}
where $(U_j^n)_{j\in\{1,\hdots,J\},n\in\mathbb{N}}$;\\
-- or use the previous analysis and discretize \eqref{nonautonomous} with an implicit 
scheme, globally in time if the initial data is of steady Riemann type or else by estimating the 
exit/entrance time
\begin{equation}\label{backforDBE}
	U^{n+1}=U^n-\dfrac{\Delta t}{{h}^2}\left(\,M(t^{n+1})\,U^{n+1}-W(t^{n+1})\right).
\end{equation}
\vskip.15cm

Using \eqref{backforD} allows to obtain a numerical solution without having to consider exit/entrance 
times, but the CFL condition for stability is $4\Delta t/{h}^2\leq 1$ and the order of approximation 
is $1$ in time, whereas \eqref{backforDBE} avoids $L^2$ and $L^\infty$ stability conditions and
increase the order of the approximation but requires the knowledge of $M(t)$.
Here, we implement both approaches and compare the results.

First of all, we analyze the case of initial data in stable phases, in order to validate the schemes 
by comparing to the exact solution for the Riemann problem stated in Proposition \ref{prop:rpb}, 
with the left-hand state $u^-$ belonging to $\mathcal{S}^-$ and
the right-hand state $u^+$ to $\mathcal{S}^+$, with $\mathcal{S}^\pm$  defined in \eqref{phases}.
Specifically, we choose the case $(u^-,u^+)=(-1,1)$, for which the predicted behavior is 
that of a steady interface and continuity and differentiability of $\phi(u)$ at $x=0.5$. 
\begin{figure}[ht]
  \centering
  \subfigure[$u$ as a function of
  $x$]{\includegraphics[width=7cm,keepaspectratio]{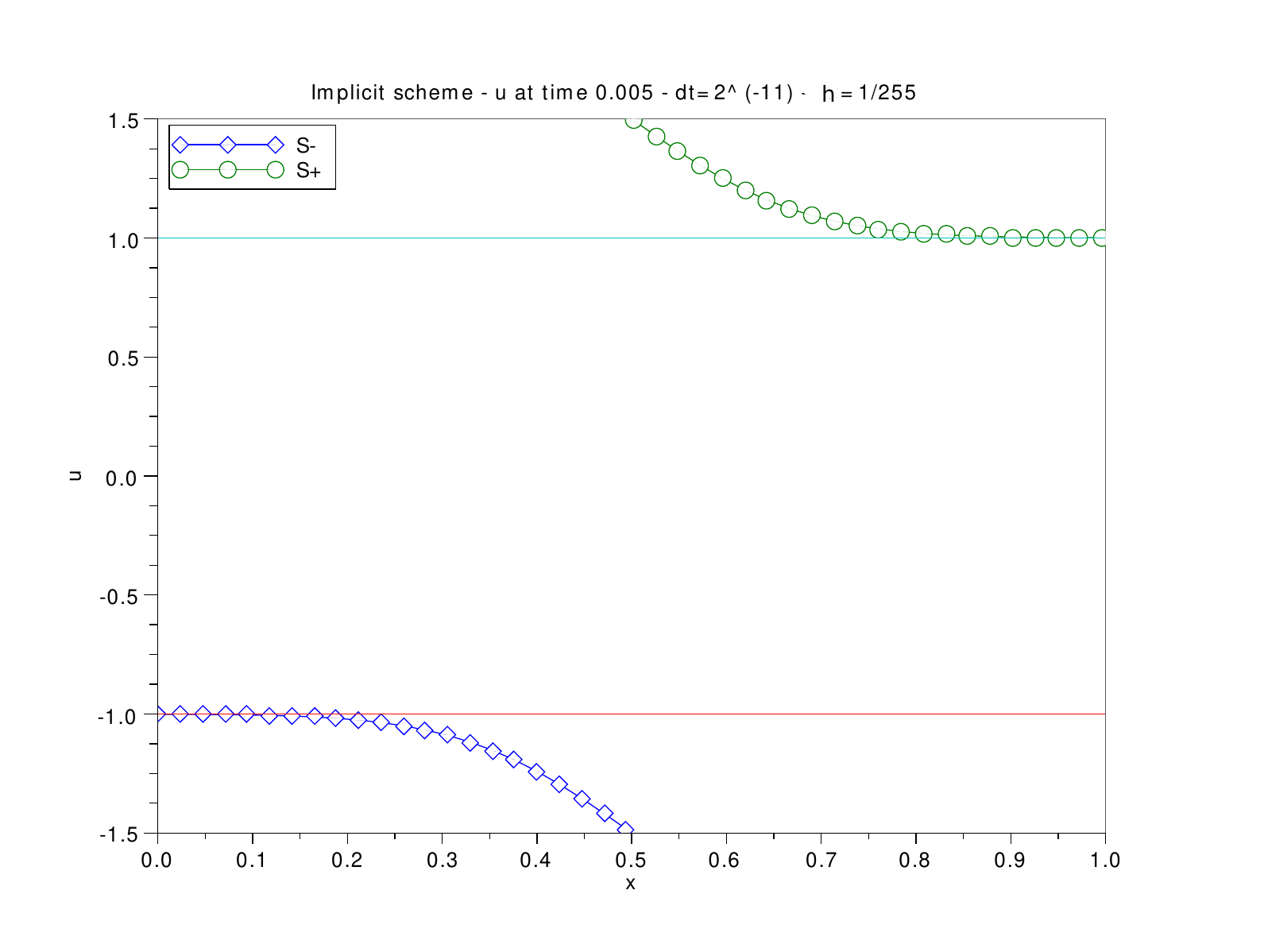}}
  \subfigure[$\phi(u)$ as a function of
  $u$]{\includegraphics[width=7cm,keepaspectratio]{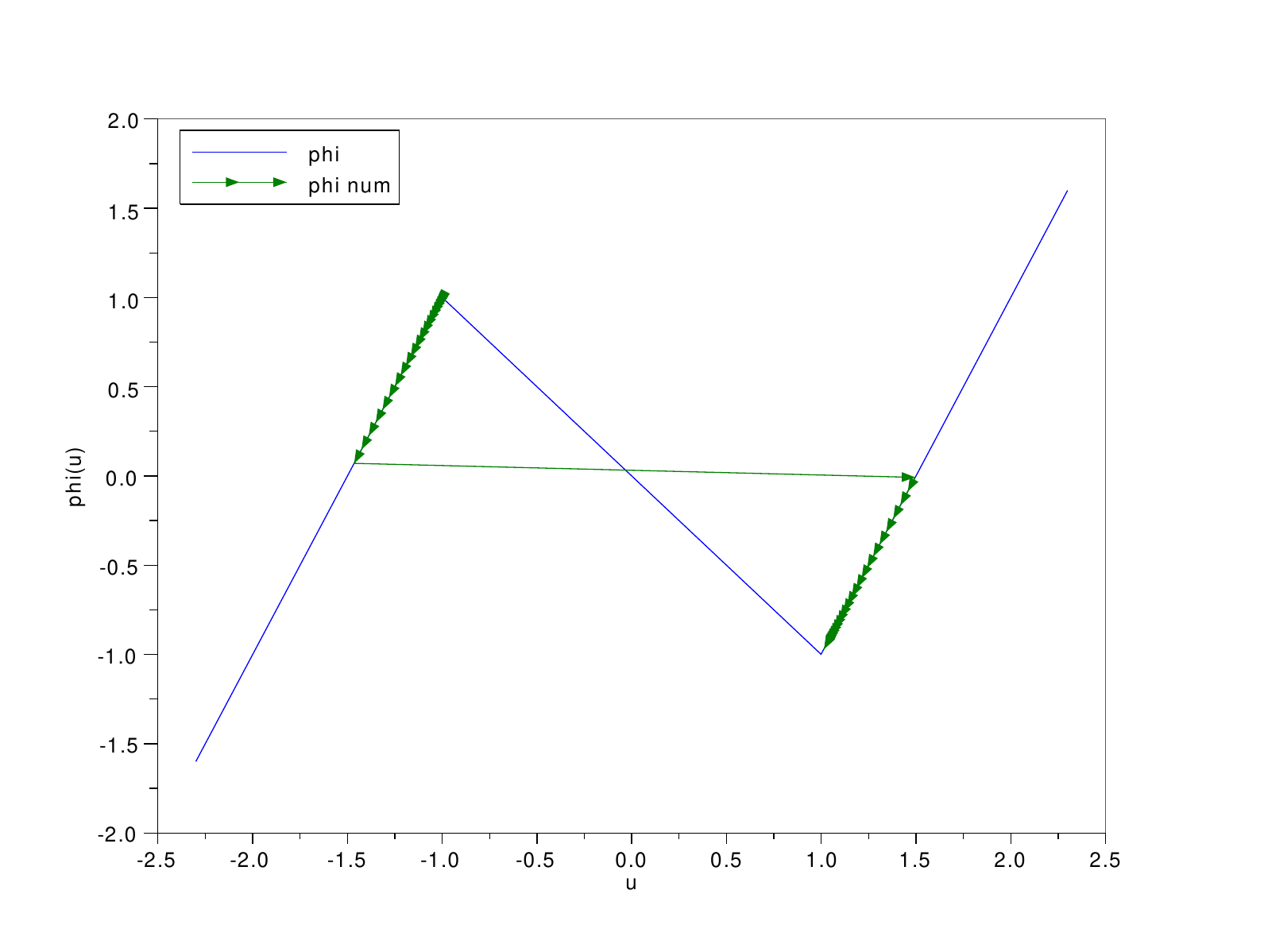}}
  \caption{\small Numerical solution at time $T=0.05$ with Riemann initial data $(-1,1)$.} 
  \label{fig:exsol}
\end{figure}   
The convergence error to the exact solution given by \eqref{decogi} can be expressed as
\begin{equation}\label{cv_err}
  E_2:=\sqrt{{h}\sum_{j=1}^J(U_j^n-u(t^n,x_j))^2}=O(\Delta t^\alpha)+O({h}^\beta).
\end{equation}
In the case of standard diffusion equations with regular initial data, e.g. taking the exact solution 
at small $t$ for $(u^-,u^+)$ in the same stable phase, under the required CFL condition 
$\Delta t=O({h}^2)$, the order of the explicit scheme is $\alpha=1$, $\beta=2$. 
The implicit scheme requires no stability condition and is also of order $\alpha=1$, $\beta=2$, 
the counterpart being that we should solve a system at each step. 
Here, since the initial data are step functions, one cannot hope for such good estimates. 

Numerically, we show here that the order of approximation is $\beta=1$ in ${h}$ for both 
schemes and between $0.5$ and $1$ in $\Delta t$ for the implicit scheme. 
The higher-order region in $\Delta t$ could be explained by the fact that, ${h}$ being fixed, 
as we refine $\Delta t$, the solution is smoothed out faster. 
In conclusion, since in the steady case the system to be inverted is linear, tridiagonal, the
implicit scheme should be preferred (see Table 1). 
\begin{center} {\footnotesize\begin{table}[h]
      \begin{tabular}{|c|c|c|}\hline
        implicit (var $\Delta t$) &implicit (var ${h}$) & Explicit ($\Delta t={h}^2/4)$\\\hline
        \begin{tabular}{c|c|c|c}
           $K$ & $J$  &
          error &cpu\\\hline
          \hline
12	&		&	0.0043659	&	0.02	\\
14	&		&	0.0034607	&	0.06	\\
18	&		&	0.0033399	&	0.62	\\\hline
20	&	9	&	0.0033395	&	3.05	\\\hline
22	&		&	0.0033394	&	9.98	\\
&&\\
&&\\
&&\\
&&\\
        \end{tabular}
        &
        \begin{tabular}{c|c|c|c}
           $K$ & $J$  &
          error & cpu\\\hline
          \hline
&	6.	&	    0.0264534  	&	   1.76  	\\
&	7.	&	    0.0132975  	&	    1.94   	\\
&	8.	&	    0.0066676  	&	    2.31  	\\\hline
20&	9.	&	    0.0033395  	&	    3.05    	\\\hline
&	10.	&	    0.0016730  	&	    4.48    	\\
&11.	&	    0.0008409  	&	    7.71  	\\
&	12.	&	    0.0004287  	&	    14.37 	\\
&	13.	&	    0.0002299  	&	    33.90  	\\
&&\\
        \end{tabular}&
 \begin{tabular}{c|c|c}
    $J$  &
   error & cpu\\\hline
   \hline
6&	0.0373874	&	0.02	\\
7&	0.0188008	&	0.13\\
8&	0.0094283	&	0.16	\\\hline
9&	0.0047216	&	0.80	\\\hline
10&	0.0023638	&	3.87	\\
11&	0.0011850	&	25.54	\\
12&	0.0005981	&	193.12	\\
13&	0.0003099	&	1971.36	\\
14&	0.0001749	&	17407.04	\\
\end{tabular}
\\
\hline
\end{tabular}\label{tab:comp_stable}
\vskip.25cm
\caption{\small Comparison of $L^2$ errors of $\phi(u)$ and execution times (cpu) for
  the implicit and Explicit schemes  wrt $\Delta
  t=2^{-K}$, ${h}=2^{-J}$.}
\end{table}}
\end{center}

Next, we consider a Riemann-type initial data with values $(u^-,u^+)=(-2,3)$.  
Since $\phi(-2)=-1$ and $\phi(3)=3$, the function $\phi(u)$ is discontinuous at 
time $t=0$ with a jump located at the middle point $x=0.5$.   
In short time, as expected, the function $\phi(u)$ is regularized and, at $x=0.5$,
takes a value in the interval $[A,B]=[-1,1]$ depending on the choice of $u^-, u^+$
(see Figure \ref{fig:prof_pwls_zero}).
As a consequence of such a regularization of $\phi$, the profile of $u$
is forced to bend at the left-hand and right-hand side of the phase transition
point.  From now on, diffusion in the two stable phases will flatten the
profile of $u$ on the two phases and, at the same time, the interaction
between the two stable phases will determine the position of the transition
interface and of the corresponding value of $\phi(u)$ on the interface itself.
\begin{figure}[ht] 
  \centering
  \subfigure[$u$ as a function of $x$]{\includegraphics[width=7cm,keepaspectratio]{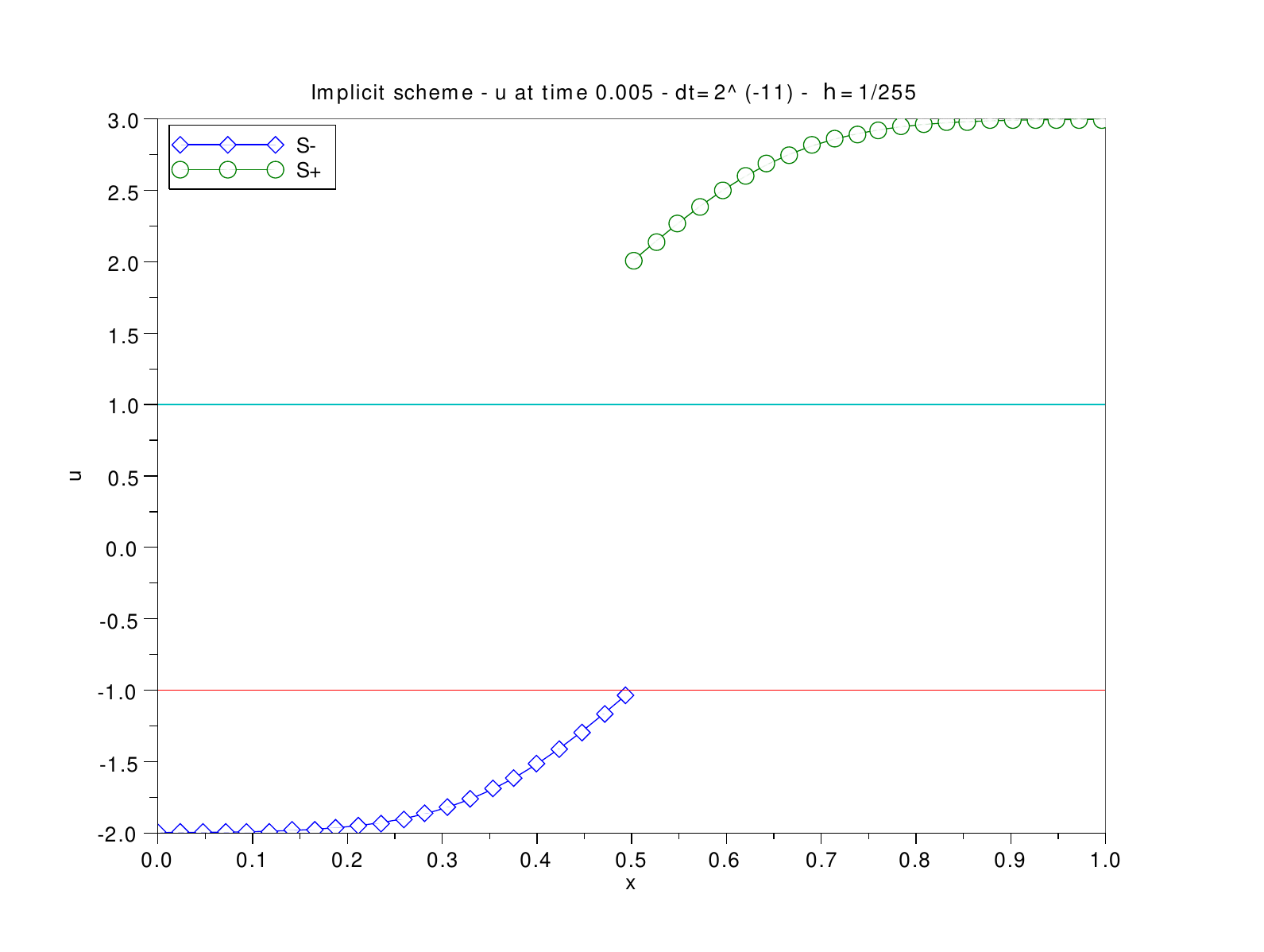}}
  \subfigure[$\phi(u)$ as a function of $u$]{\includegraphics[width=7cm,keepaspectratio]{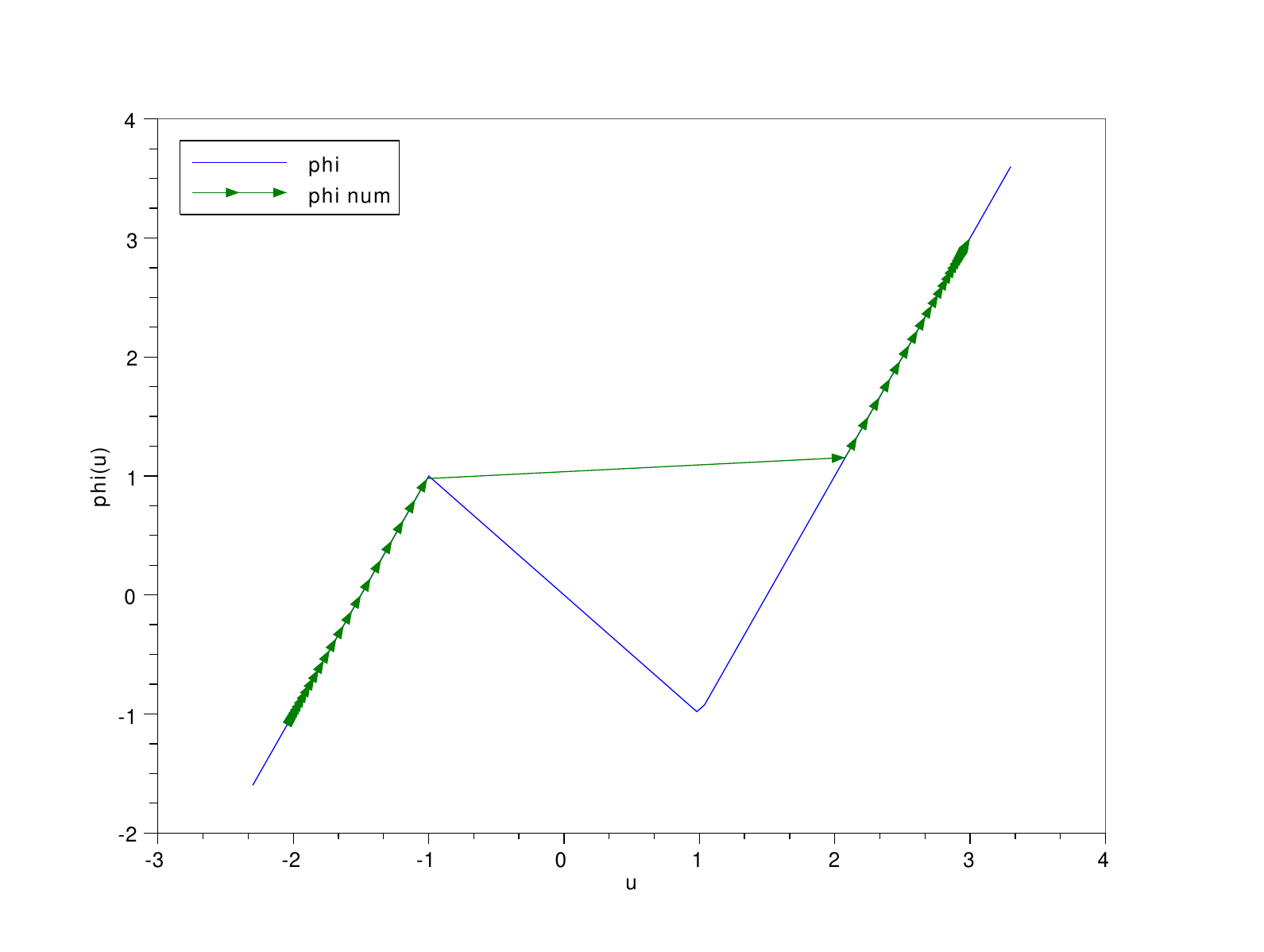}}
   \caption{\small\footnotesize  Short time evolution of the Riemann data $(-2,3)$.
    The two graphs show the profile of $u$ and $\phi(u)$ at the final time $t=5\times 10^{-3}$.}
\label{fig:prof_pwls_zero}  
\end{figure}

For large time, the solution $u$ converges to a Riemann--shaped steady state
and the profile of $\phi(u)$ tends to a constant exponentially fast, as shown in Figure
\ref{fig:profiles_pwls_-2_3}. 
As the continuous theory predicted \eqref{entropycdngen}, since
$\phi(u^-)+\phi(u^+)=2\in[2A,2B]$, the interface did not move. The same
asymptotic behavior is shown by different choices of initial values 
$(u^-,u^+)$.  As long as we were dealing with steady cases, we were able to use the
implicit scheme without further study.
\begin{figure}[ht] 
  \centering
  \subfigure[$u$ as a function of $x$]{\includegraphics[width=6cm,keepaspectratio]{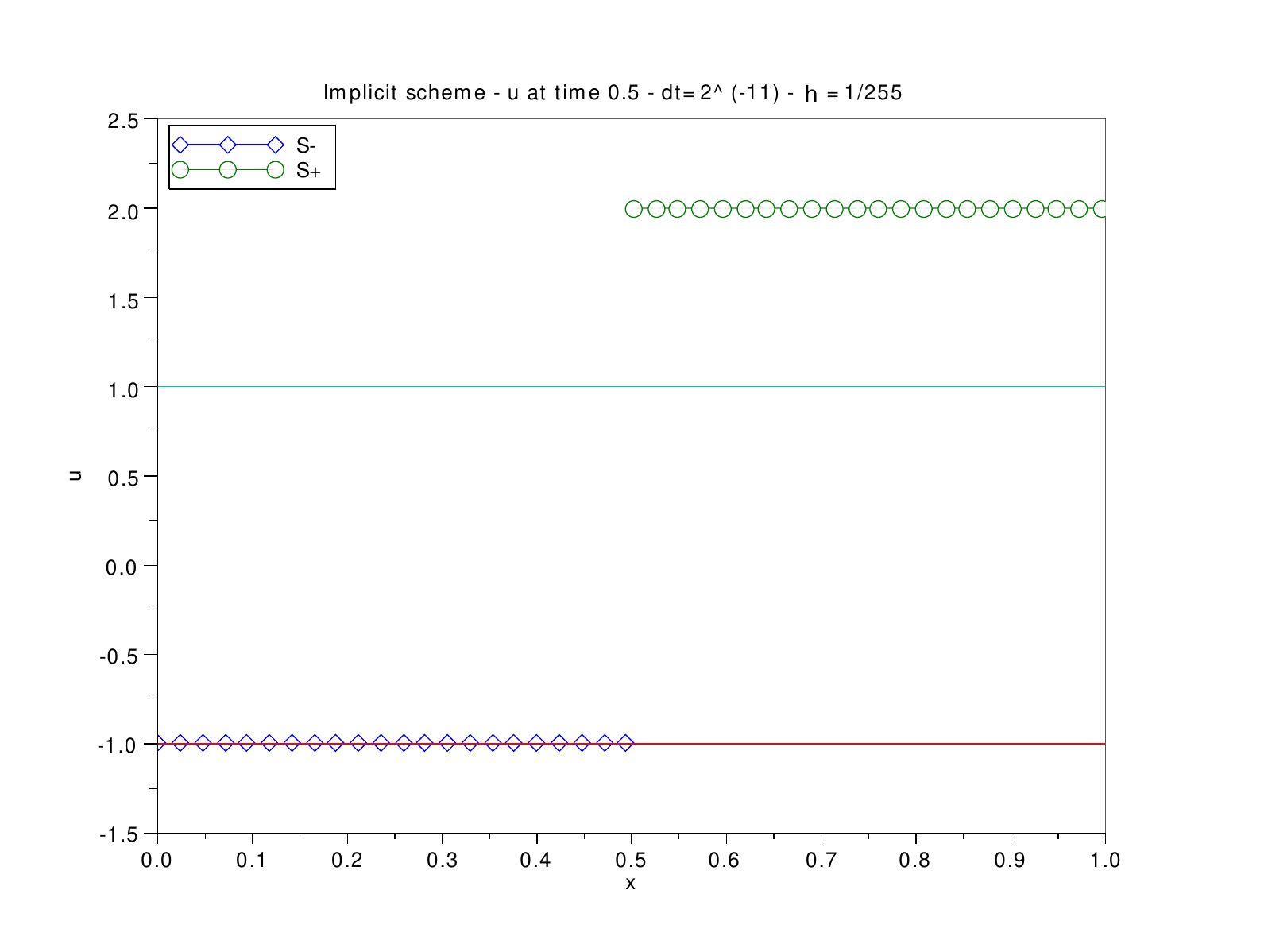}}
  \subfigure[error as a function of $x$]{\includegraphics[width=6cm,keepaspectratio]{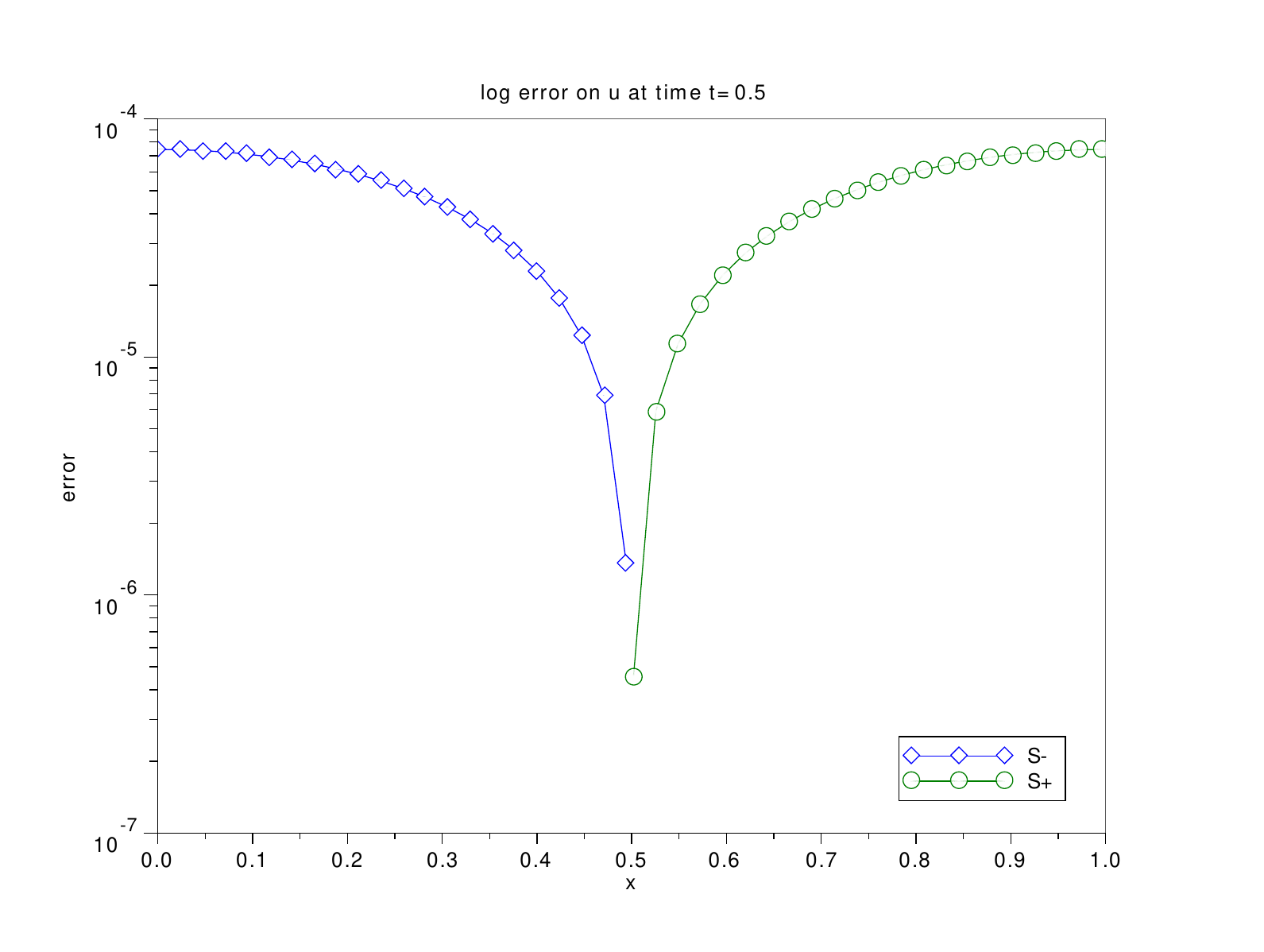}}
  \caption{\small\footnotesize  Large time evolution of the Riemann data $(-2,3)$.
   Letting the time go as far as $t=0.5$ ensures an error of order $10^{-4}$ according to \eqref{rate}.}
\label{fig:profiles_pwls_-2_3}
\end{figure}

A higher level in the jump between the initial states $u^-$ and $u^+$ leads to a
movement of the transition interface separating the stable phases, as shown in 
Figure \ref{fig:profiles_pwls_-2_4}, corresponding to the initial data $(-2,4)$.
We used an explicit scheme to treat this case since a point might have to go 
through the unstable phase. 

\begin{figure}[ht]
  \centering
  \subfigure[$u$ as a function of $x$]
  	{\includegraphics[width=6cm,keepaspectratio]{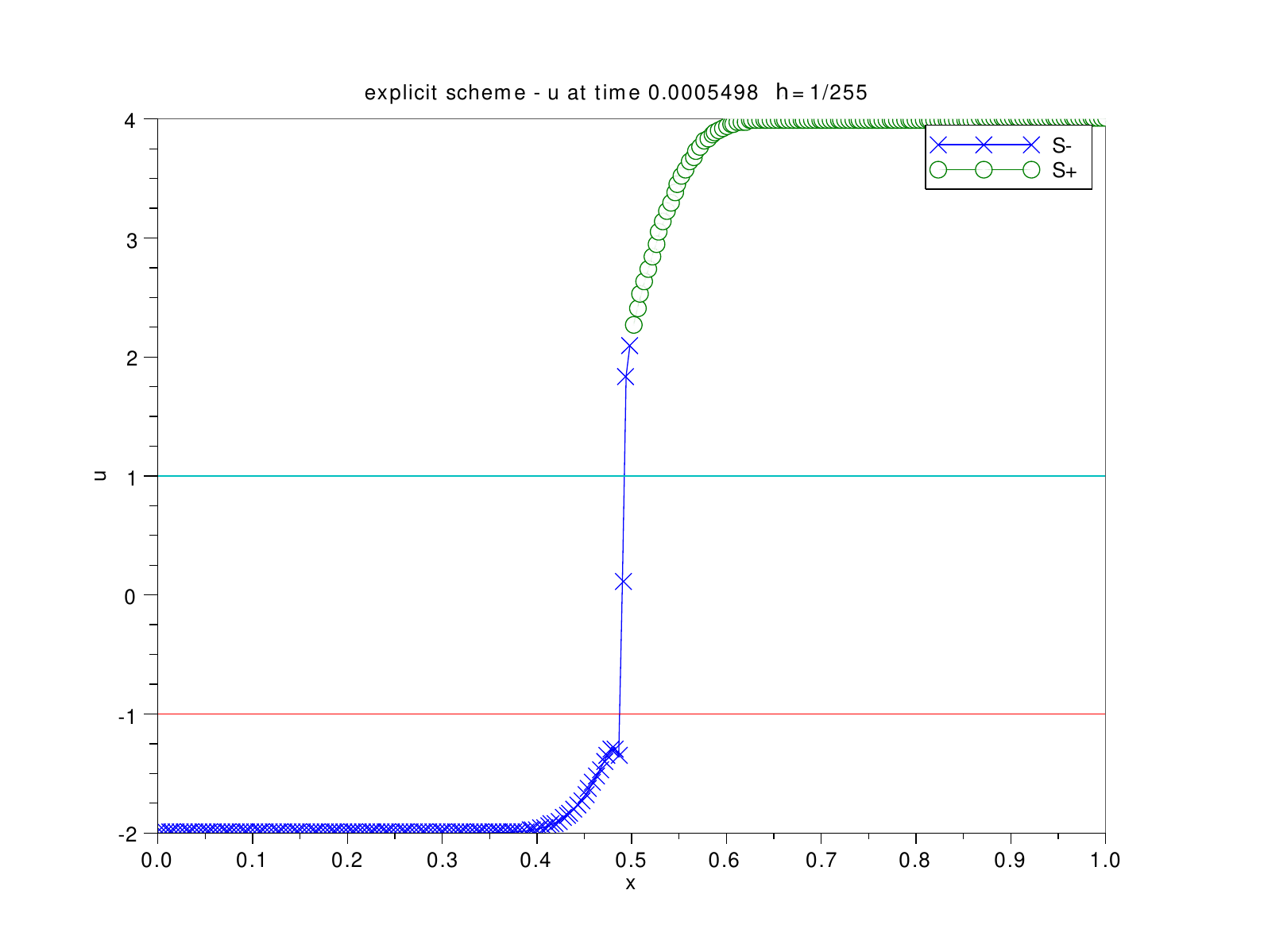}}
  \subfigure[$\phi(u)$ as a function of $u$]
  	{\includegraphics[width=6cm,keepaspectratio]{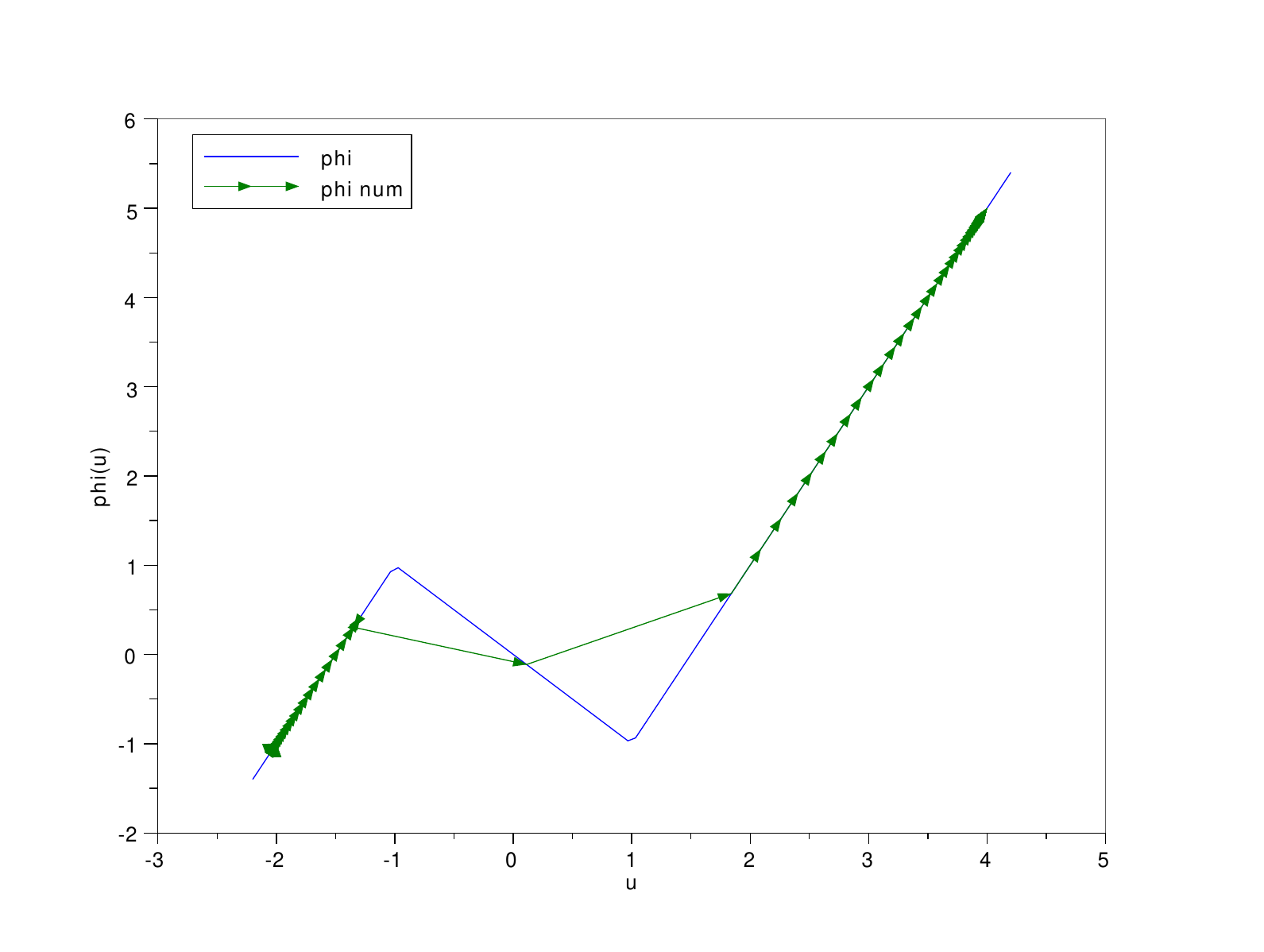}}\\
  \subfigure[$\phi(u)$ as a function of $x$]
  	{\includegraphics[width=6cm,keepaspectratio]{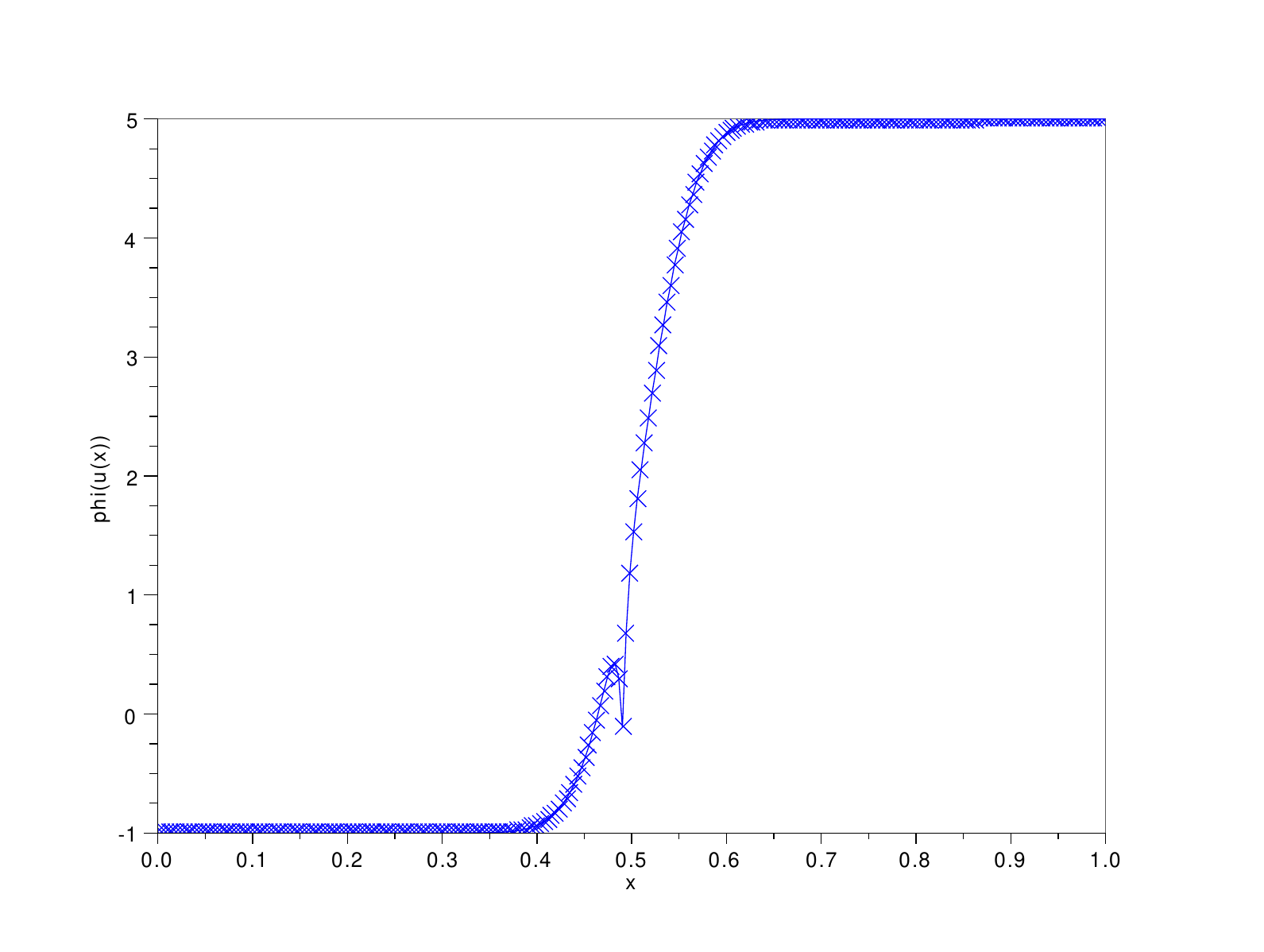}}
\subfigure[zoom of $\phi(u)$ around the interface]
	{\includegraphics[width=6cm,keepaspectratio]{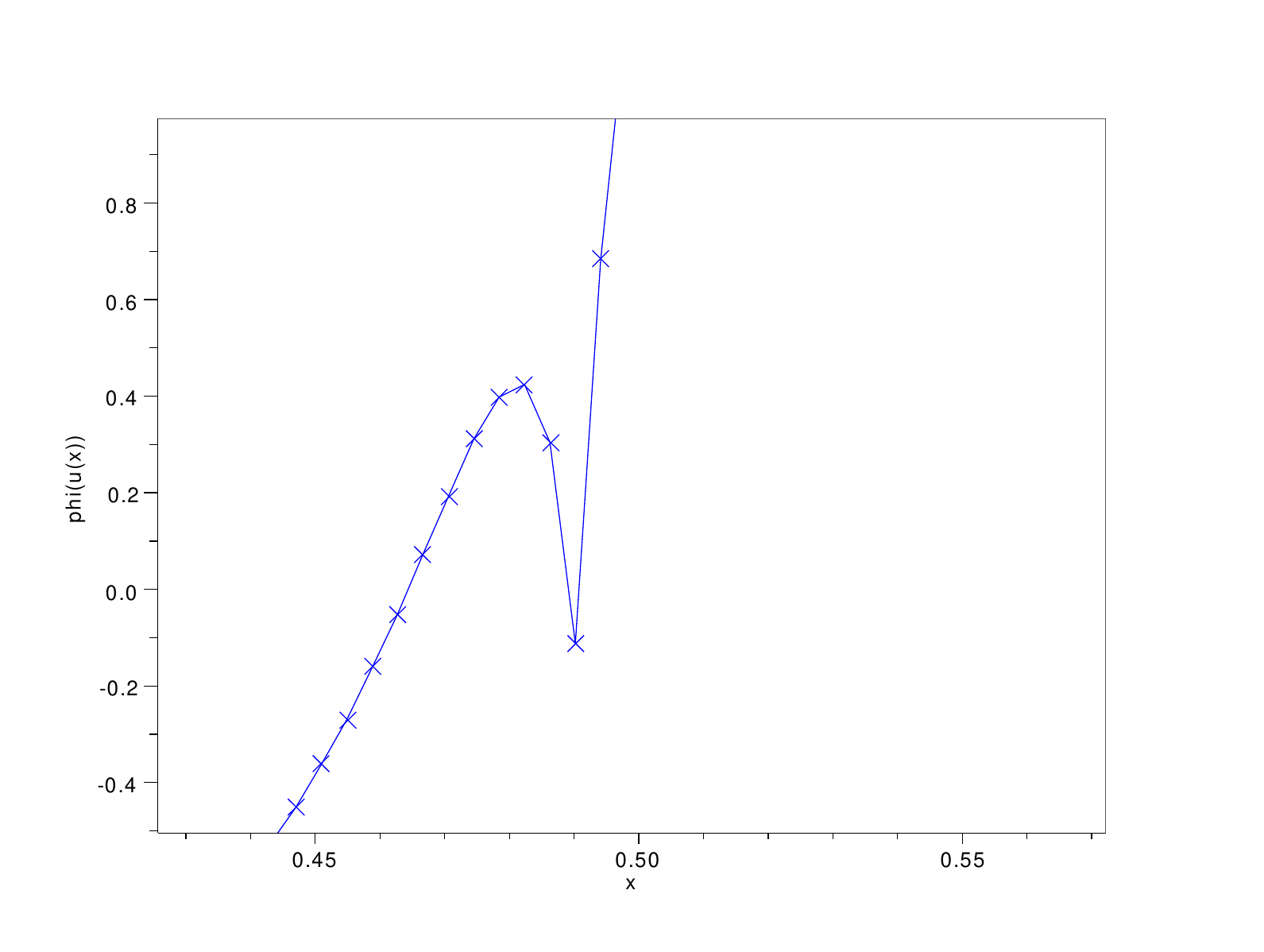}}
\caption{\small\footnotesize Small time evolution of the Riemann data $(-2,4)$.  
On the right-hand side, the graph in the phase--space $(u,\phi(u))$ with corresponding values 
of the solution, where a point in the spinodal region is clearly seen.}
\label{fig:profiles_pwls_-2_4}
\end{figure}

The analysis of the movement of the transition interface $\zeta$ starting at
$x=0.5$ at initial time $t=0$ is very delicate.  First of all, we have to
define the position of the interface.  Here, we consider the interface $\zeta$ in
the semi-discretized scheme to be determined by finding the maximal (resp. minimal) value of
$j$ such that $U_j\leq -1$ (resp. $U_j\geq 1$). 
Here, the position $\zeta=\zeta(t)$ of the interface is given by $\zeta(t)=k(t)\cdot{h}$, 
$k(t)$ being the index localizing the left-hand side of the interface. 
The explicit scheme allows us to get an approximation of the movement of the interface (see 
Figure \ref{fig:mvt_explicit}), from which one recognizes the parabolic shape predicted 
by \eqref{autosimilar}.
However, the use of this scheme yields oscillations.
\begin{figure}[ht]
  \centering
   \subfigure[Movement of the interface]{\includegraphics[width=7cm,height=5cm]{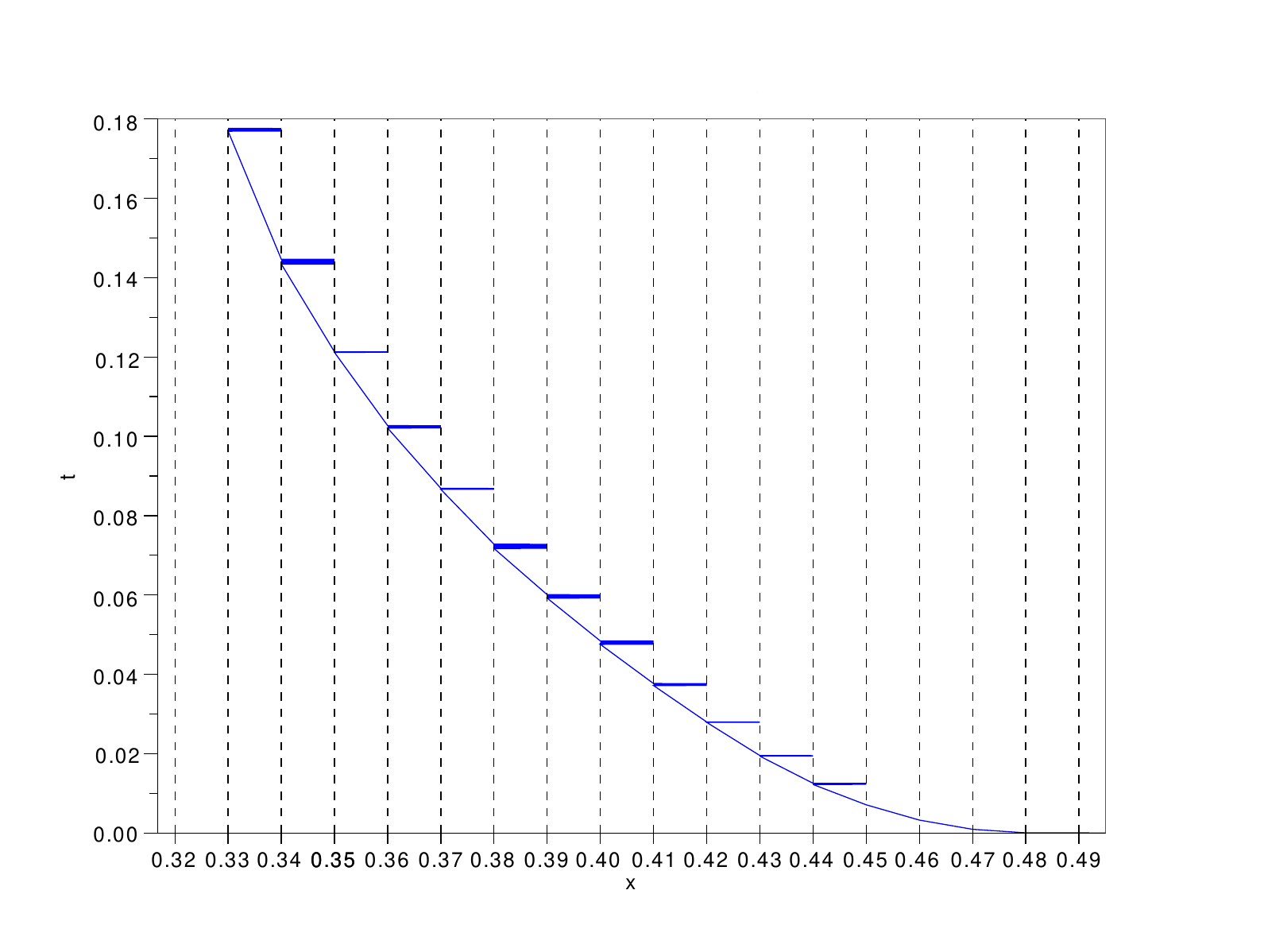}}
 \subfigure[Values of {$-[\partial_x(\phi(u))]/[u]$}]{\includegraphics[width=7cm, height=5cm]{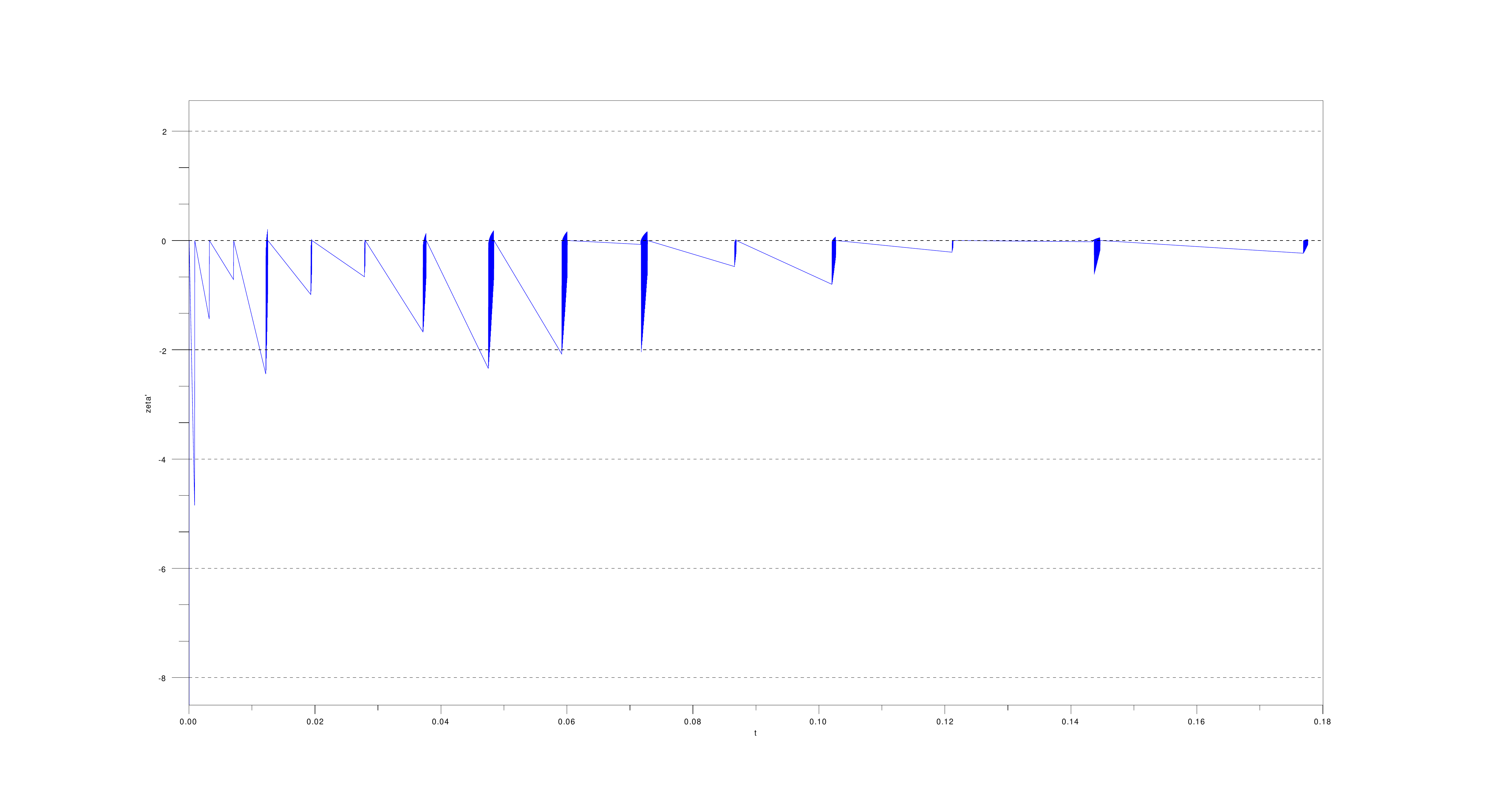}}
  \caption{\small Riemann initial data $u^-=-2$, $u^+=4$ with ${h}=0.01$.}
  \label{fig:mvt_explicit}
\end{figure}

The space derivative $\phi(u)_x$ of the function $\phi(u)$ varies very rapidly close
to the interface $\zeta$, consistently with the presence of a movement of the
interface.  Nevertheless, the algorithm finds as value $\phi(u)\bigr|_{\zeta}$
an uncorrected level (in the limit $\varepsilon\to 0^+$, left--moving
interface are allowed if and only if $\phi(u)=B=1$).  In fact, in the
continuous limiting model the unstable phase $(a,b)$ is prohibited and the
solution $u$ jumps instantaneously from one phase to the other. On the
contrary, in the semi-discretized algorithm \eqref{backforSDlim}, the
transition from one phase to the other is accomplished by smoothly passing
from values under the level $a=-1$ to values above the level $b=1$, thus
passing all way through the spinodal region $(a,b)$ (see Figure \ref{fig:profiles_pwls_-2_4}(c)-(d)). 
Correspondingly, the function $\phi$ decreases its value from $B$ to $A$ and then increases again to $B$. 
Such a transition is faster as ${h}\to 0^+$, but it is always (incorrectly) present.

\section{Two--phase scheme}\label{sec:twophase}

In this Section, we consider the problem of determining a numerical scheme for
the limiting equation \eqref{nonlinpar} together with the entropy conditions described 
in Proposition \ref{prop:entrsol}.  
The algorithm is based on the idea that initial data taking values only in the stable regions 
give rise to solutions with values in the stable regions matched in order to satisfy the 
appropriate transmission conditions.
In the interior of any region, where the solution is in one of the stable phases, a diffusion 
equation has to be solved; and the procedure has to be completed with an appropriate 
description of the transition interfaces behavior, based on the entropy conditions.

At each step, additionally to the values $U^n\in\mathbb{R}^J$ giving the approximate
value of the solution, we also track the non-integer position $\zeta^n$ of the transition
interface determining the passage from one phase to the other.  
The index of such position is denoted by $j_*^n$.

\subsection*{Description of the algorithm}
Let $\phi^\pm\,:\,\mathbb{R}\to\mathbb{R}$ be two strictly increasing extensions of restrictions 
$\phi\bigr|_{(-\infty,b)}$ and $\phi\bigr|_{(a,+\infty)}$, respectively (where, for the 
sake of simplicity,  $\phi$ is piecewise linear and symmetric).
For $j\in\{2,\dots,J-1\}$, let us introduce the projection matrix $\Pi_j$
\begin{equation*}
 \Pi_j:=\left(\begin{array}{ccc} \mathbb{O}_{j-1} &  0 & 0\\
 0 & \mathbb{I}_2 & 0 \\ 0 & 0  & \mathbb{O}_{J-j-1} \end{array}\right),
 \end{equation*}
 where $\mathbb{O}_j$ and $\mathbb{I}_j$ denotes the null and the identity $j\times j$
 matrices, respectively.  
 Finally, given $A<B$, let us introduce the {\sf truncation function} $T$ defined by
\begin{equation}\label{truncation}
 T(U):=\max\{A,\min\{U,B\}\}.
\end{equation}
 The algorithm reads as follows.  Let $U\in\mathbb{R}^J$ and $j_*\in\{1,\dots,J\}$ be
the values for the approximate solution and approximate interface location.  Let
$C:=C(U,j_*)$ be the transition value suggested by \eqref{transvalue}, i.e. set
\begin{equation*}
 C=C(U,j_*):=\frac{1}{2}\left(\phi_-(U_{j_*-1})+\phi^+(U_{j_*+2})\right).
\end{equation*} 
If $C\notin[A,B]$, that is the transition value is not in the hysteresis
interval, we use the truncation function \eqref{truncation} to transform it in
an admissible value.  Specifically, if $C>B$, then $T(C)=B$; if $C<A$, then
$T(C)=A$.  Correspondingly, the values of $U$ at $j_*$ and $j_*+1$ are mapped
in the values $(\phi^-)^{-1}(T(C))$ and $(\phi^+)^{-1}(T(C))$, respectively (i.e.
$b$ and $d$ if $C>B$, and $c$ and $a$ if $C<A$).  If $C\in[A,B]$, the
transition value is in the hysteresis loop and the new values for $U$ at $j_*$
and $j_*+1$ have to be determined, imposing the transmission conditions
\begin{equation*}
  \bigl[\phi(u)\bigr]= \bigl[\phi(u)_x\bigr]=0,
\end{equation*}
that is, in discretized form,
\begin{equation*}
 \phi(U_{j_*})=\phi(U_{j_*+1}),\qquad
 \phi(U_{j_*})-\phi(U_{j_*-1})=\phi(U_{j_*+2})-\phi(U_{j_*+1}).
\end{equation*}
These conditions give
\begin{equation}\label{trans2}
 \phi(U_{j_*})=\phi(U_{j_*+1})
  =\frac{1}{2}\left(\phi(U_{j_*-1})+\phi(U_{j_*+2})\right)=C(U,j_*).
\end{equation}
Hence, let us set
\begin{equation}\label{defF}
 \begin{array}{ll}
 \mathbb{F}(U,j_*):=
  (0,\dots,0&,\underbrace{(\phi^-)^{-1}(T(C))},(\phi^+)^{-1}(T(C)),0,\dots,0)\\
 &\quad\textrm{\tiny $j$-th element}
 \end{array}
\end{equation}
Then, setting $\tilde U^{n}:=\mathbb{F}(U^n,j^n_*)+(\mathbb{I}-\Pi^{j^n_*})\,U^n$,
the scheme can be written as 
\begin{equation}\label{schemerelaxed}
 \left\{\begin{aligned}
  &\Pi_{j^n_*}U^{n+1}=\mathbb{F}(U^n,j^n_*),\\
  &(\mathbb{I}-\Pi_{j^n_*})U^{n+1}=
	 (\mathbb{I}-\Pi_{j^n_*})\left(U^n-\frac{\Delta t}{{h}^2}
	\,\mathbb{A}\,\phi(\tilde U^{n})\right)
	\end{aligned}\right.
\end{equation}
where $\mathbb{A}$ is the matrix introduced \eqref{matrixA}.  Note that the scheme
\eqref{schemerelaxed} coincides with \eqref{backforD}, $\varepsilon=0$, apart
from the definition at the transition points $j_*$ and $j_*+1$.

To define the transition interface sequence, given $U\in\mathbb{R}^J$ and
$j_*\in\{1,\dots,J\}$, let us define the values for the (discrete)
time-derivative of the interface $\zeta$ as follows
\begin{equation*}
 \zeta'(C)=-\frac{1}{(\phi^+)^{-1}(T(C))-(\phi^-)^{-1}(T(C))}
  \left(\frac{\phi(U_{j_*+2})-T(C)}{{h}}
   -\frac{T(C)-\phi(U_{j_*-1})}{{h}}\right)
\end{equation*}
that is
\begin{equation*}
 \zeta'(C)=\frac{2(T(C)-C)}{((\phi^+)^{-1}(T(C))-(\phi^-)^{-1}(T(C))){h}}.
\end{equation*}
If $C\in[A,B]$, then $T(C)=C$, and $\zeta'=0$;
if $C>B$, then $T(C)=B$ and $\zeta'<0$; 
finally, if $C<A$, then $T(C)=A$ and $\zeta'>0$.
Thus, the entropy jump conditions are satisfied.  
The transition interface algorithm is defined by
\begin{equation}\label{interfacealgo}
	\zeta^{n+1}= \zeta^{n}+\zeta'(C^n)\,\Delta t
	\qquad\textrm{and}\qquad
	j^{n+1}_*=\left[\frac{\zeta^{n+1}}{{h}}\right]+1.
\end{equation}
Next, let us fix our attention on the scheme \eqref{schemerelaxed} in the
steady interface case (i.e. $C(U^n,j^n_*)\in[A,B]$ for any $n$).  In this
situation, it is possible to consider a corresponding semi-discrete scheme.
  Setting $V=\phi(U)$, relations \eqref{trans2}
becomes
\begin{equation}\label{trans3}
 V_{j_*}=V_{j_*+1}=\frac{1}{2}\left(V_{j_*-1}+V_{j_*+2}\right),
\end{equation}
so that the original system for the variable $V=(V_1,\dots,V_J)$ can be reduced to a
system for $\hat V:=(V_1,\dots,V_{j_*-1}, V_{j_*+2},\dots,V^J)$.  The system
to be satisfied by $\hat V$, taking into account \eqref{trans3}, is
\begin{equation}\label{VeqRel}
 \frac{d\hat V}{ds}=-\hat{\mathbb{A}}\,\hat V,
\end{equation}
where $s=2\tau=2t/{h}^2$ and $\hat{\mathbb{A}}$ is the symmetric $(J-2)\times
(J-2)$ matrix defined by
{
\begin{equation}\label{matrixAhat}
 \hat{\mathbb{A}}:=\left(\begin{array}{cccccccccccccc}
      1 & -1& 0 &\hdots &\hdots &\hdots &\hdots &\hdots &\hdots  &0 &0 &0 \\
      -1 & 2 & -1 &\ddots & \hdots & \hdots  &\hdots&\hdots &\hdots  & 0& 0&0\\
      0 & -1 & 2 & \ddots   & \ddots  & \hdots & \hdots&\hdots &\hdots & 0& 0&0\\
      \vdots & \ddots & \ddots & \ddots &\ddots &\ddots  &&& & \vdots&\vdots &\vdots \\
      \vdots &\vdots & \ddots & -1 & 2 & -1 & \ddots& & & \vdots&\vdots &\vdots  \\
      \vdots & \vdots &\vdots &  \ddots & -1& 3/2 & -1/2 & \ddots& & \vdots &\vdots &\vdots   \\
      \vdots & \vdots & \vdots   & & \ddots & -1/2 & 3/2 & -1 &   \ddots& \vdots  &\vdots &\vdots\\
      \vdots & \vdots & \vdots   &   & &  \ddots & -1 &  2 & -1&   \ddots& \vdots &\vdots \\
      \vdots & \vdots &  \vdots  &  &    &   &  \ddots & \ddots  & \ddots  & \ddots &\ddots&\vdots \\
      0& 0 &0 &\hdots &\hdots &\hdots &\hdots&\ddots &\ddots   &   2    &  -1     & 0 \\
      0& 0 &0 &\hdots &\hdots &\hdots &\hdots&\hdots &\ddots &   -1    &  2     & -1 \\
      0& 0 &0 &\hdots &\hdots &\hdots &\hdots&\hdots &\hdots   &  0     & -1     & 1
    \end{array}\right)
\end{equation}
}(here the lines $(\dots,0,-1, 3/2, -1/2, 0,\dots)$ and $(\dots,0,-1/2,
 3/2, -1, 0,\dots)$ correspond to the indices $j_*-1$ and $j_*+2$).  
 The matrix $\hat{\mathbb{A}}$ defined in \eqref{matrixAhat} has spectral properties similar
 to the ones of the matrix $\mathbb{A}$.  
 Specifically, $\hat{\mathbb{A}}$ is semi-definite positive, $0\in \sigma(\hat{\mathbb{A}})$ and 
 $\hat{\mathbb{A}} \hat r_1=0$ where $\hat r_1=(1,\dots,1)^T/\sqrt{J-2}$.  
 The matrix $\hat P_1:=\hat r_1\otimes \hat r_1={\mathbf 1}/(J-2)$, where ${\mathbf 1}$ 
 is the $(J-2)\times(J-2)$ matrix composed by all ones, is the eigenprojection
 corresponding to the eigenvalue $0$.  
 As a consequence, solution to \eqref{VeqRel} with initial data $V^0$ converges 
 exponentially fast to
\begin{equation*}
 \hat P_1\,V^0=\frac{1}{J-2}\sum_{j=1}^{J-2}\,V^0_j\,(1,\dots,1)^T,
\end{equation*}
that is the solution $U$ converges to a Riemann-shaped steady state as $\tau\to+\infty$ 
as described in \eqref{lim_U}.

\subsection*{Numerical experiments for the two-phase scheme}
Next, we perform numerical experiments for the scheme \eqref{schemerelaxed}
with the same Riemann-type initial data considered for the scheme \eqref{backforD}
specifically with $(u^-,u^+)=(-2,3.5)$ and $(u^-,u^+)=(-2,4)$.
Results are shown in Figure \ref{fig:Lprof_pwls_zero}.
\begin{figure}[ht]
  \centering
  \subfigure[$u^-=-2$, $u^+=3.5$]{\includegraphics[width=7cm,keepaspectratio]{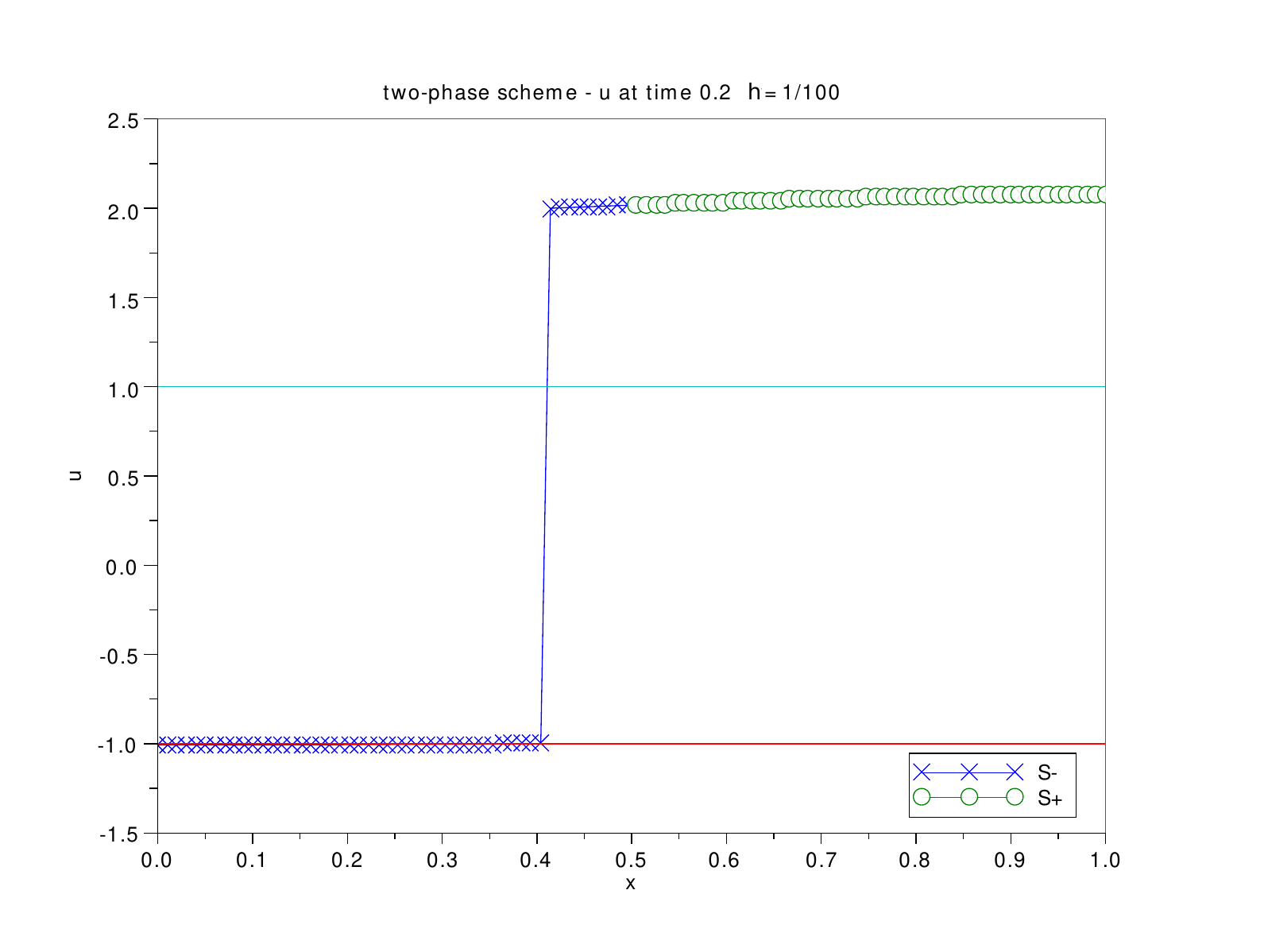}}
\subfigure[$u^-=-2$, $u^+=4$]{\includegraphics[width=7cm,keepaspectratio]{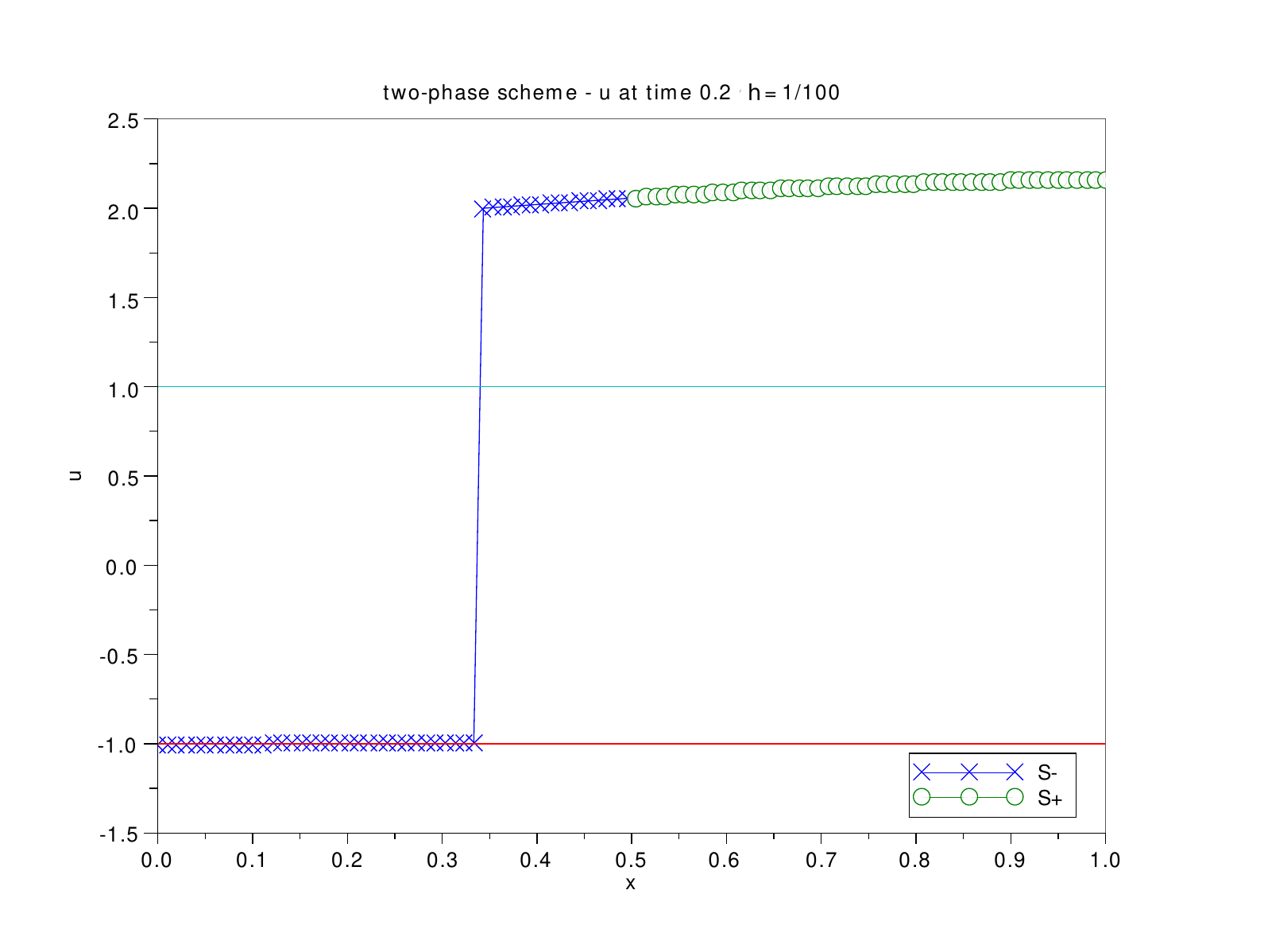}}
\caption{\small\footnotesize Evolution of the Riemann data. 
The two graphs show the profile of $u$ at time $t=0.2$ and ${h}=0.01$.
The choice of different symbols helps keeping track of the initial position of the interface.}
\label{fig:Lprof_pwls_zero}
\end{figure}
   
In term of the solution profiles $u$ (or $\phi(u)$), the results obtained with the present approach
and the one considered in the previous Section are  consistent with each other.  
On the contrary, in term of position of the moving interface, the behavior is very different,  
being much more regular and non oscillatory in the case of the algorithm \eqref{schemerelaxed} 
(compare Figures \ref{fig:mvt_explicit}(b) and \ref{fig:2p_mvt_explicit}).
\begin{figure}[ht]
  \centering
   \subfigure[Movement of the interface]{\includegraphics[width=7cm, height=5cm]{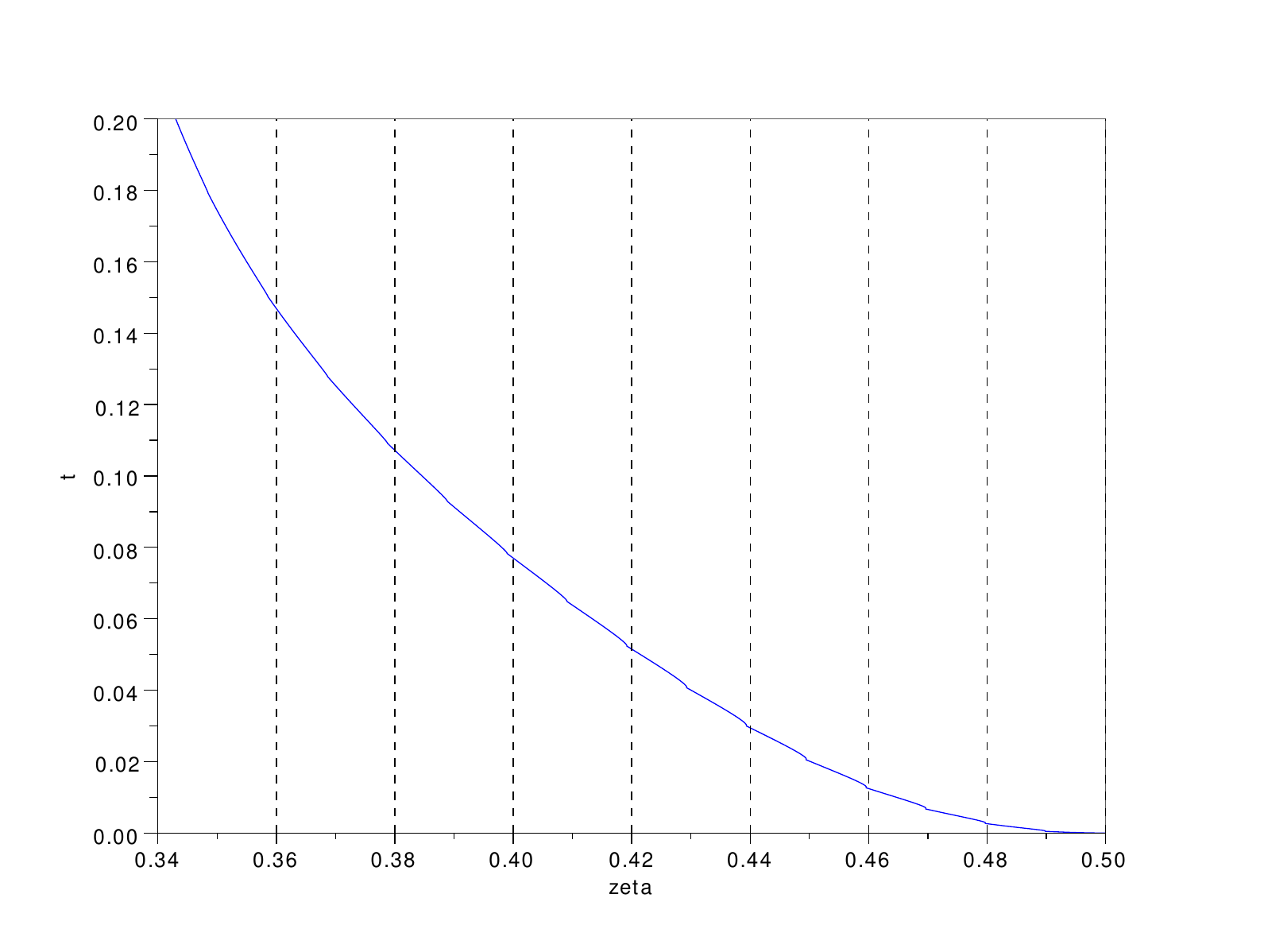}}
   \subfigure[Values of {$-[\partial_x(\phi(u))]/[u]$}]{\includegraphics[width=7cm, height=5cm]{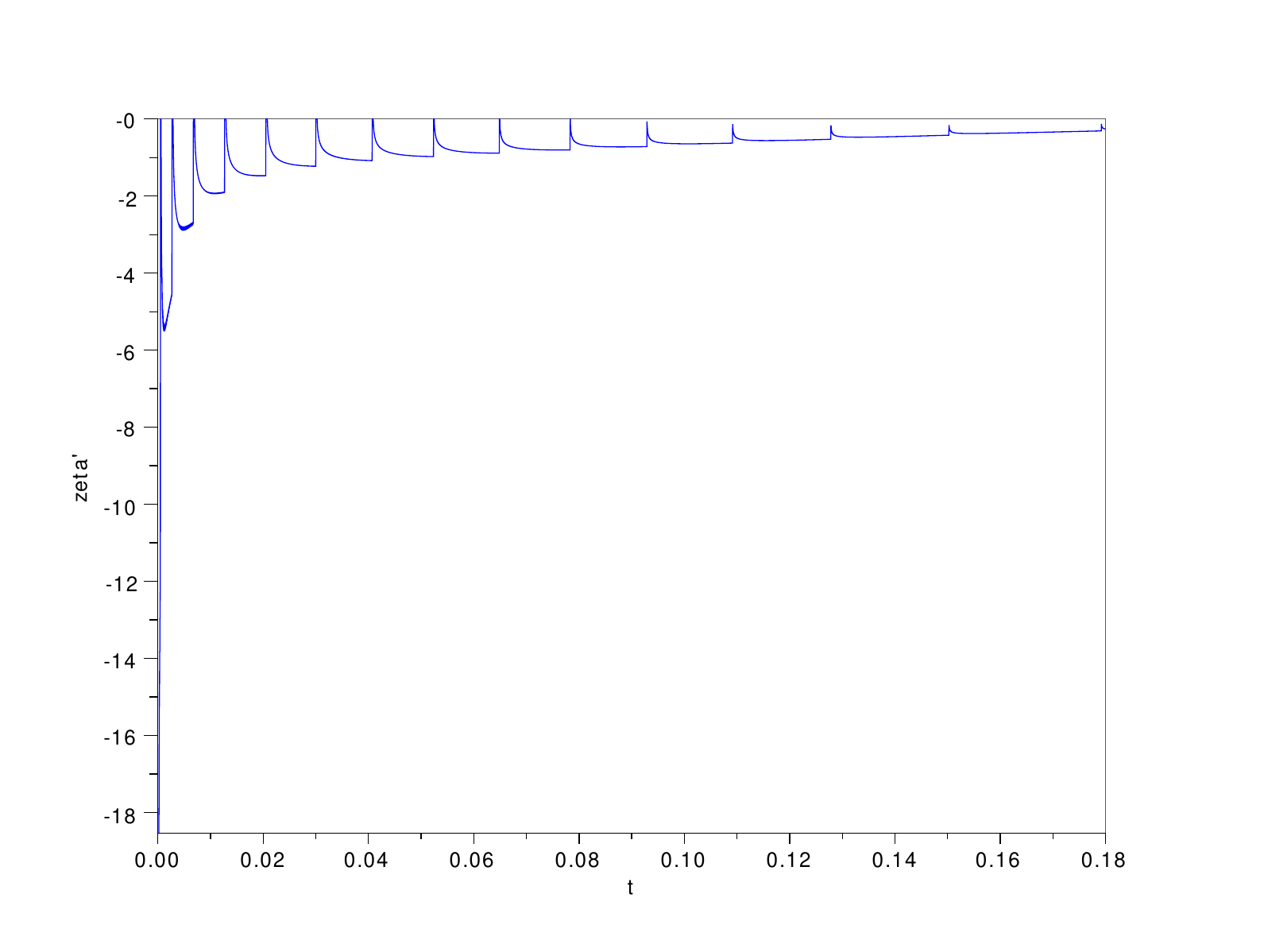}}
  \caption{\small Riemann initial data $u^-=-2$, $u^+=4$ with ${h}=0.01$.}
  \label{fig:2p_mvt_explicit}
\end{figure}

Indeed, the two approaches differ strongly in the determination of the interface position. 
When dealing with the algorithm \eqref{backforD}, the position of the transition interface 
is determined by the last index $j$ such that $u_j\leq b$.  
As a consequence, the movement of the interface is always concentrated at specific iterations 
and have an amount of multiples of the space-discretization size ${h}$.  
The algorithm proposed for the limit model \eqref{nonlinpar} is based on the assumption that both the solution 
$u$ and the interface position $\zeta$ are unknowns of the problem.  
Formula \eqref{interfacealgo} permits to define a smoother location of the transition interface since 
its position is determined by calculating the values $\zeta'$ at each step.  
The discrepancy between the two approaches is shown in Figure \ref{fig:comp_fe_2p}, for the 
Riemann problem with initial data $(-2,4)$.
\begin{figure}[ht]
          \centering \subfigure[Solution of explicit and two-phase schemes]
          	{\includegraphics[width=7cm,keepaspectratio]{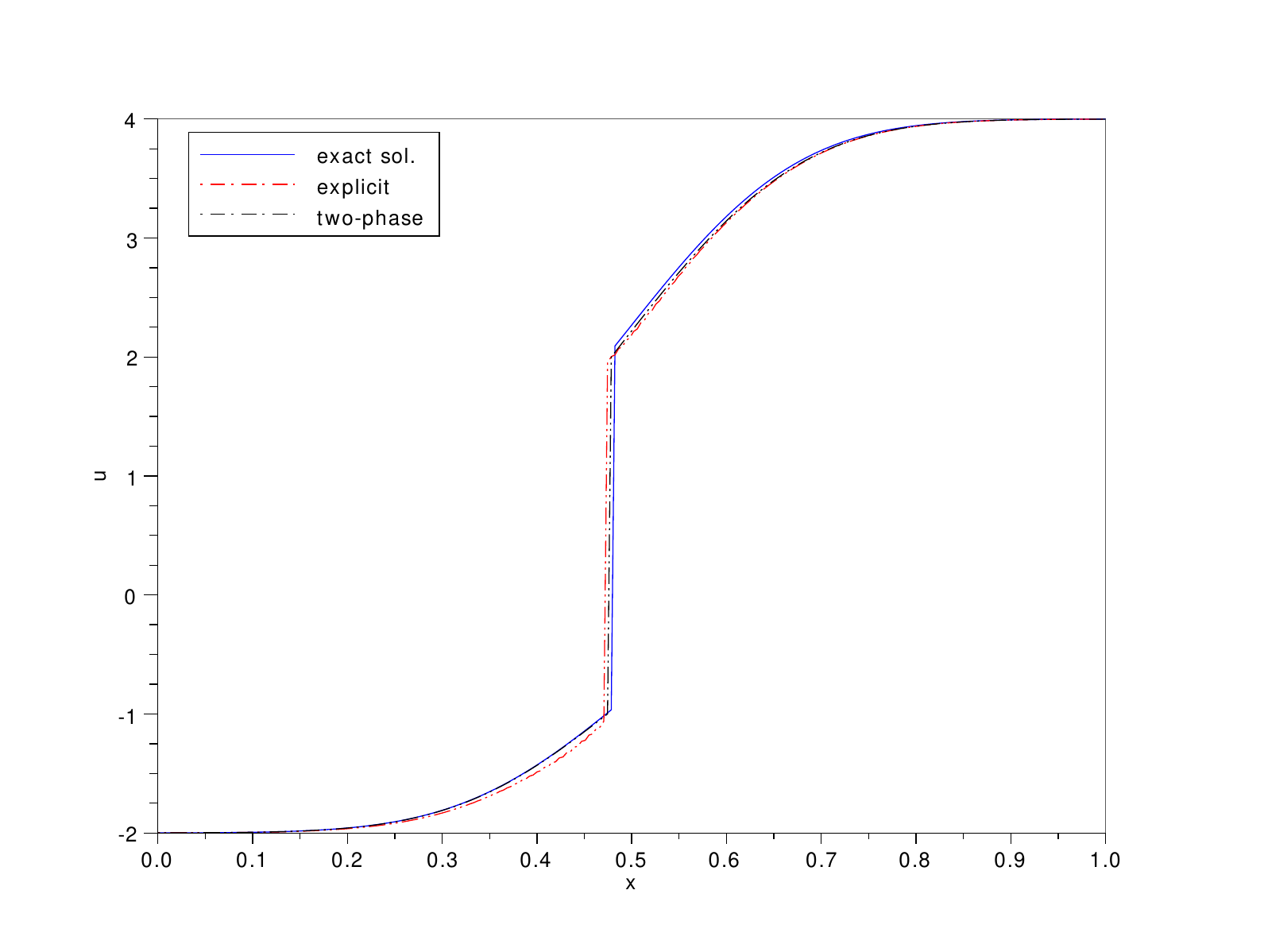}}\qquad
          \subfigure[Zoom]
          	{\includegraphics[width=7cm,keepaspectratio]{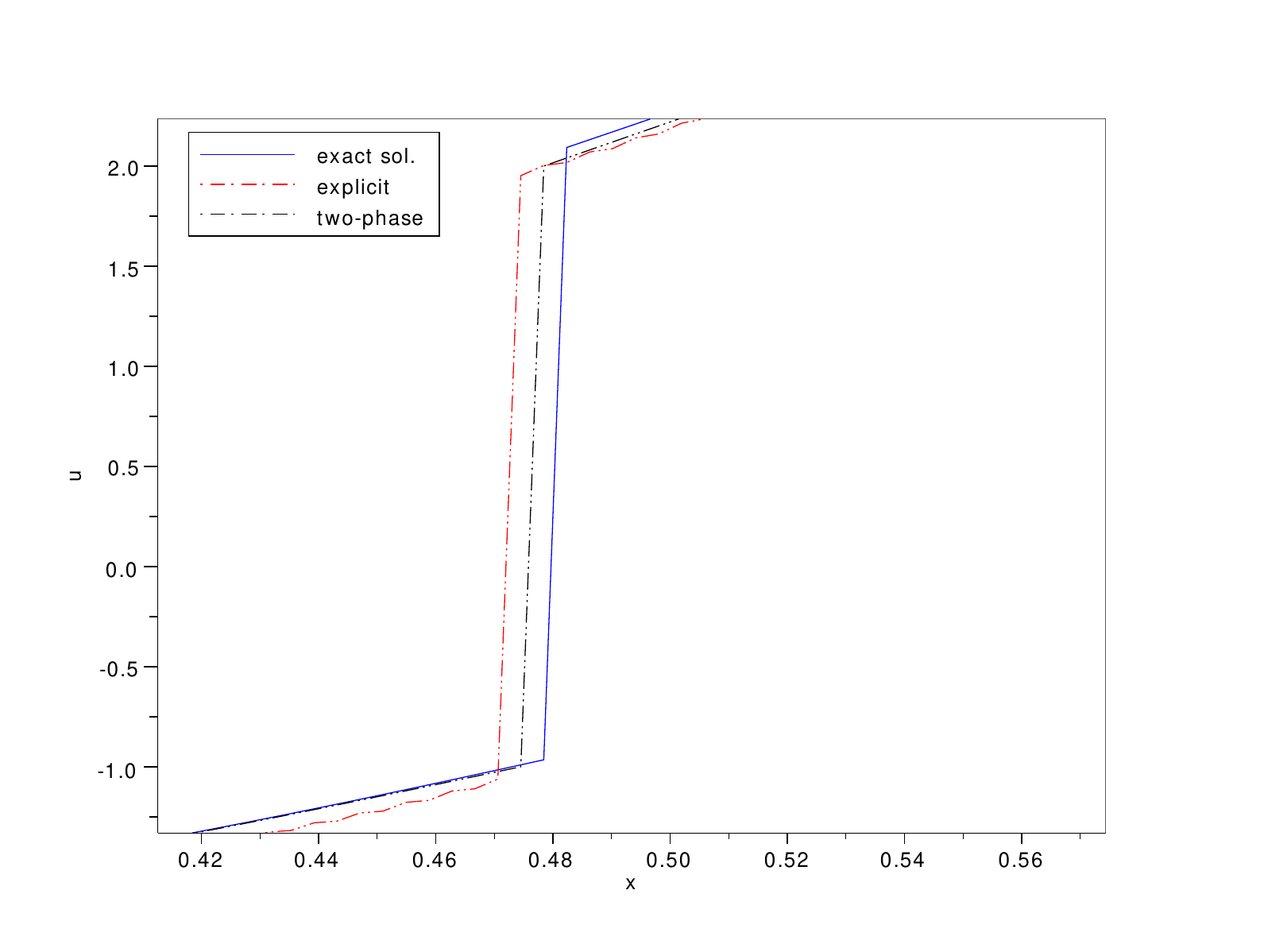}}\\
\caption{\small Comparison of the numerical solutions of the explicit and two-phase schemes 
with the exact solution for $(u^-,u^+)=(-2,4)$ at $t=0.005$ for ${h}=1/255$}
\label{fig:comp_fe_2p}
\end{figure}
The two-phase scheme and the explicit scheme are both of order $1$ in ${h}$ 
(see Fig. \ref{fig:cv}), but the error for the two-phase scheme (compared to the exact 
solution computed in \eqref{decogi}, the constant $\bar{\xi}$ being computed with 
Newton's algorithm) is less (see Figure \ref{fig:comp_fe_2p})
\begin{figure}[ht]
       \centering 
       \subfigure[Two-phase scheme: error as a function of the mesh size ${h}$]
       		{\includegraphics[width=7cm,keepaspectratio]{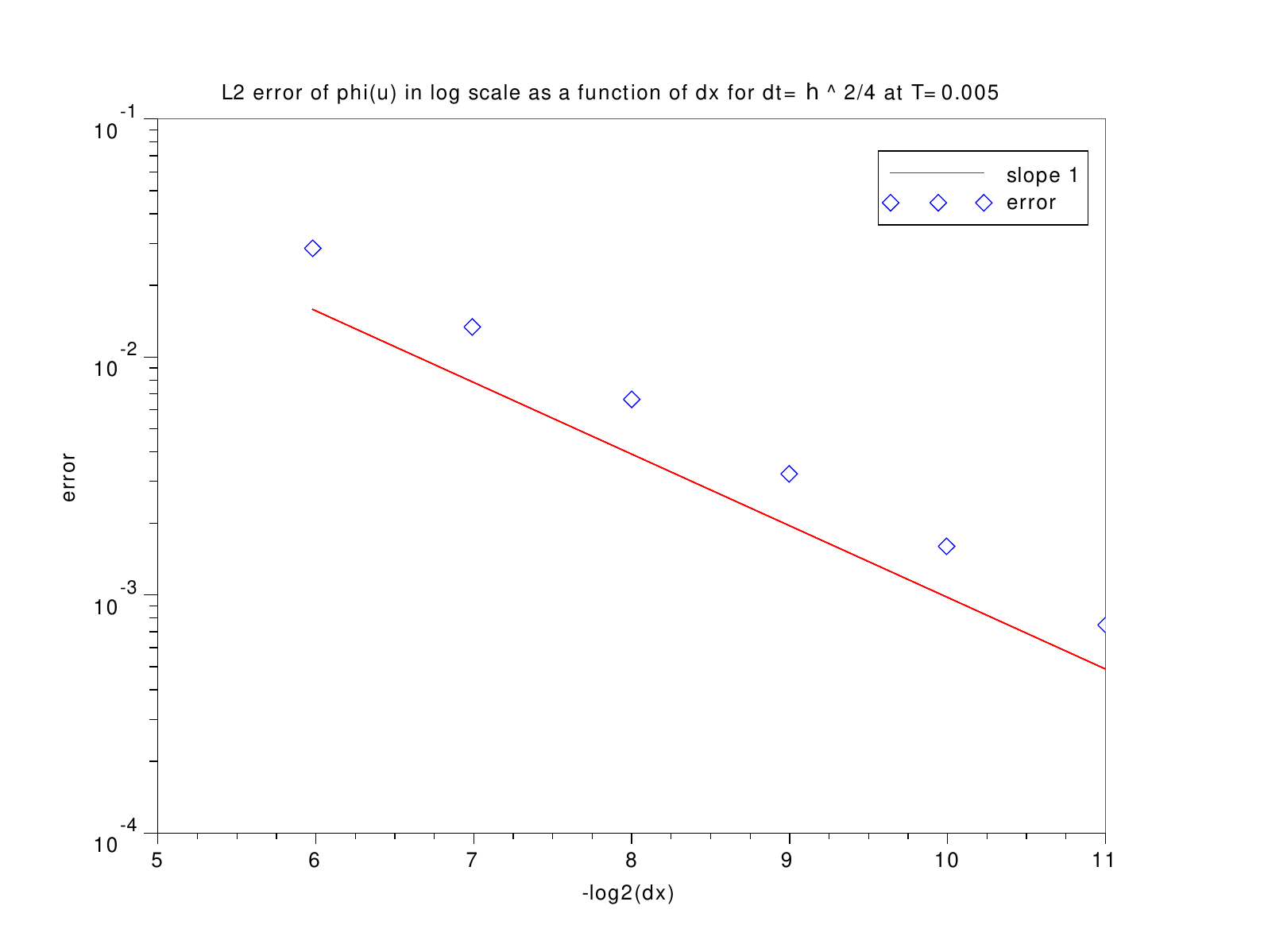}}\qquad
       \subfigure[Explicit scheme:  error as a function of the mesh size ${h}$]
       		{\includegraphics[width=7cm,keepaspectratio]{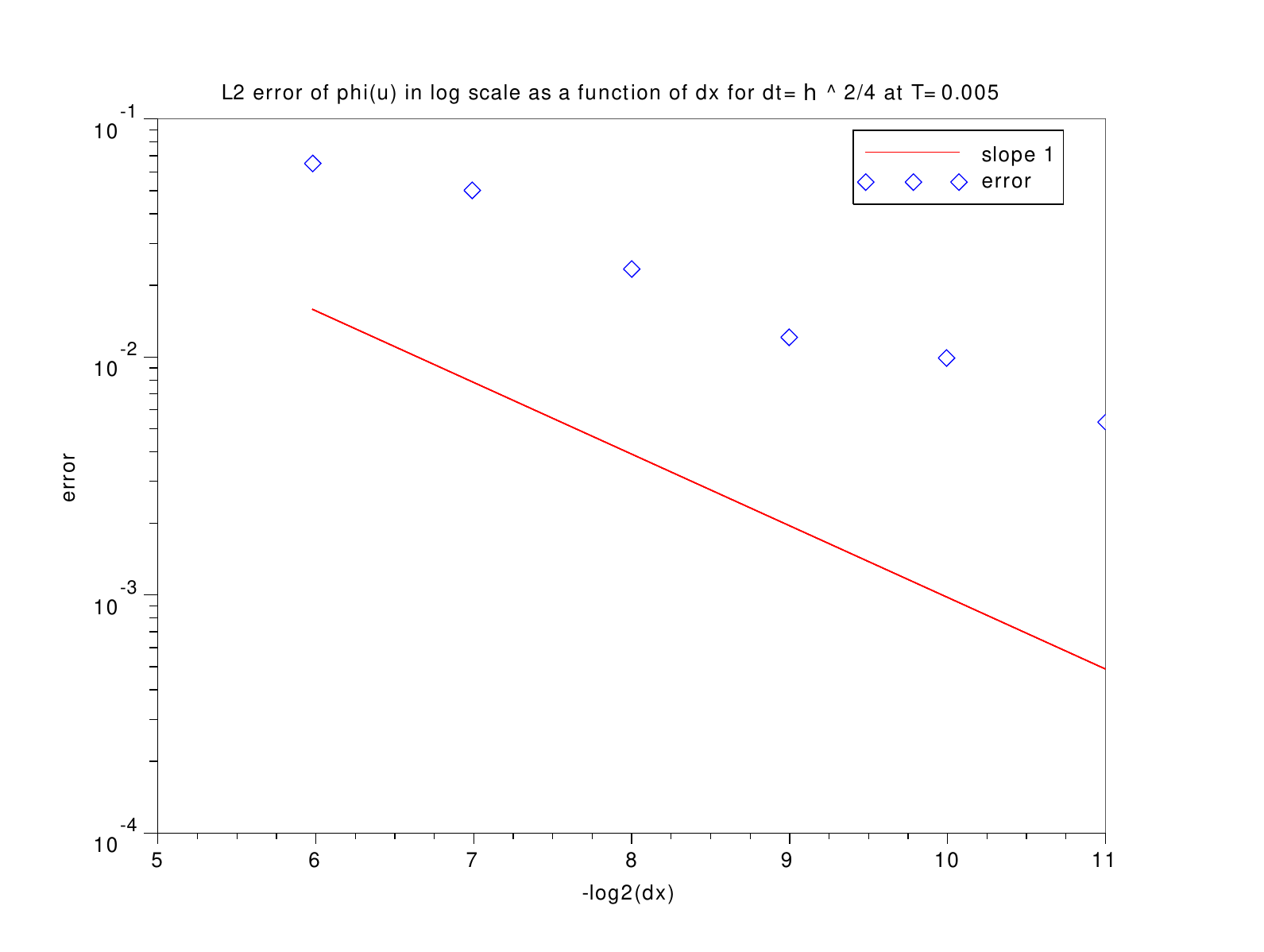}}\\
\caption{\small Convergence error for $\phi(u)$ in $L^2$ norm}
\label{fig:cv}
\end{figure}
and the interface $\zeta$  is better approximated and smoother (see Figure \ref{fig:error_zeta}). 
However, due to the intermediate computations, the numerical cost is also higher than 
the one of the explicit scheme (see Table 2).
\begin{center} 
{\footnotesize\begin{table}[!h]\label{tab:tps_exec_mod}
      \begin{tabular}{|c|c|}\hline
        Two-phase scheme & Explicit\\\hline
 \begin{tabular}{c|c|c}
    $J$  &
   error & cpu\\\hline
   \hline
6&	0.028548	&	0.23	\\
7&	0.0133988	&	0.34	\\
8&	0.0065732	&	1.16	\\
9&	0.0032091	&	7.18	\\
10&	0.0015852	&	64.73	\\
11&	0.0007498	&	919.69	\\
\end{tabular}&
 \begin{tabular}{c|c|c}
    $J$  &
   error & cpu\\\hline
   \hline
6&	0.0600245	&	0.13	\\
7&	0.0300282	&	0.26	\\
8&	0.0150125	&	1.05	\\
9&	0.0075057	&	6.02	\\
10&	0.0037539	&	52.31	\\
11&	0.0016060	&	723.27	\\
\end{tabular}
\\
\hline
\end{tabular}\vskip.25cm
\caption{\small Comparison of $L^2$ errors of $\phi(u)$ and execution times (cpu) for
  the two-phase and explicit schemes for $(u^-,u^+)=(-2,4)$.}
\end{table}}
\end{center}

\section*{Acknowledgments}
The first and the second author are thankful to the Dipartimento di Matematica ``G. Castelnuovo'', 
Sapienza, Universit\`a di Roma (Italy), and the Laboratoire  Paul Painlev\'e, Universit\'e de Sciences et 
Technologies de Lille (France), respectively, for the kind hospitality that eventually led to the present research.

\end{document}